\theoremstyle{plain}
\newtheorem{theorem}{Theorem}[section]
\newtheorem{corollary}[theorem]{Corollary}
\newtheorem{lemma}[theorem]{Lemma}
\newtheorem{proposition}[theorem]{Proposition}
\theoremstyle{definition}
\theoremstyle{remark}
\newcommand{\U }{\,\mathcal {U}}
\newcommand{\W }{\,\mathcal {W}}
\newcommand{\h}{\tilde v}
\author[E. Detomi]{Eloisa Detomi}
\address{Dipartimento di Matematica ``Tullio Levi-Civita", Universit\`a di Padova \\
 Via Trieste 63\\ 35121 Padova \\ Italy}
\email{detomi@math.unipd.it}
\author[M. Morigi]{Marta Morigi}
\address{Dipartimento di Matematica, Universit\`a di Bologna\\
Piazza di Porta San Donato 5 \\ 40126 Bologna\\ Italy}
\email{marta.morigi@unibo.it}
\author[P. Shumyatsky]{Pavel Shumyatsky}
\address{Department of Mathematics, University of Brasilia\\
Brasilia-DF \\ 70910-900 Brazil}
\email{pavel@unb.br}
\thanks{This research was supported by FAPDF and CNPq-Brazil}
\keywords{Residually finite groups, Engel words, Concise words} 
\subjclass[2000]{20E26, 20F45, 20F10, 20E10} 
\begin{document}

\title[Words of Engel type II]{Words of Engel type are concise in residually finite groups. Part II}

\begin{abstract} This work is a natural follow-up of the article \cite{dms-conciseness}. Given a group-word $w$ and a group $G$, the verbal subgroup $w(G)$ is the one generated by all $w$-values in $G$. The word $w$ is called concise if $w(G)$ is finite whenever the set of $w$-values in $G$ is finite. It is an open question whether every word is concise in residually finite groups. Let $w=w(x_1,\ldots,x_k)$ be a multilinear commutator word, $n$ a positive integer and $q$ a prime power. In the present article we show that  the word $[w^q,{}_ny]$ is concise in residually finite groups (Theorem \ref{bb}) while the word $[w,{}_ny]$ is boundedly concise in residually finite groups (Theorem \ref{aa}).
\end{abstract}

\maketitle
\section{Introduction}
Let $w=w(x_1,\dots,x_k)$ be a group-word. Given a group $G$, we denote by $w(G)$ the verbal subgroup corresponding to the word $w$, that is, the subgroup generated by the set $G_w$ of all values $w(g_1,\ldots,g_k)$, where $g_1,\ldots,g_k$ are elements of $G$. The word $w$ is called concise if $w(G)$ is finite whenever the set of $w$-values in $G$ is finite. More generally, a word $w$ is called concise in a class of groups $\mathcal X$ if
$w(G)$ is finite whenever the set of $w$-values in $G$ is finite for a group $G\in\mathcal X$. In the sixties Hall raised the problem whether all  words are concise. In 1989 S. Ivanov \cite{ivanov} (see also \cite[p.\ 439]{ols}) solved the problem in the negative. On the other hand, the problem for residually finite groups remains open (cf. Segal \cite[p.\ 15]{Segal} or Jaikin-Zapirain \cite{jaikin}). In recent years a number of new results with respect to this problem were obtained (see \cite{AS1, gushu,  fealcobershu, dms-conciseness, dms-bounded}). The work \cite{dms-conciseness}  deals with conciseness of words of Engel type in residually finite groups. 
Recall that  multilinear  commutator words, also known under the name of outer commutator words, are precisely the words that can be written in the form of multilinear Lie monomials. The multilinear commutator words were shown to be concise (in the class of all groups) by J.\,C.\,R. Wilson \cite{jwilson}. Set $[x,{}_1y]=[x,y]=
x^{-1}y^{-1}xy$ and $[x,{}_{i+1}y]=[[x,{}_iy],y]$ for $i\geq1$. The word $[x,{}_ny]$ is called the $n$th Engel word. One of the results obtained in 
 \cite{dms-conciseness}  says that for any multilinear commutator word $w=w(x_1,\dots,x_k)$ and any positive integer $n$ the word $[w,{}_ny]$ is concise in residually finite groups. 

We say that a word $w$ is boundedly concise in a class of groups $\mathcal X$ if for every integer $m$ there exists a number $\nu=\nu(\mathcal X,w,m)$ such that whenever $|G_w|\leq m$ for a group $G\in\mathcal X$ it always follows that $|w(G)|\leq\nu$. Fernandez-Alcober and Morigi showed that every word which is concise in the class of all groups is actually boundedly concise \cite{fernandez-morigi}. It was conjectured in \cite{fealcobershu} that every word which is concise in the class of residually finite groups is boundedly concise.

One purpose of the present article is to show that the words $[w,{}_ny]$, where $w=w(x_1,\dots,x_k)$ is a multilinear commutator word, are boundedly concise in residually finite groups.

\begin{theorem}\label{aa} Suppose that $w=w(x_1,\ldots,x_k)$ is a multilinear commutator word and $n$ is a positive integer. The word $[w,{}_ny]$ is boundedly concise in residually finite groups.
\end{theorem}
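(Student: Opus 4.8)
The plan is to deduce the bounded conciseness statement (Theorem~\ref{aa}) from the already-established conciseness of $[w,{}_ny]$ in residually finite groups, using the general machinery that converts conciseness into bounded conciseness. The key idea, going back to Fernandez-Alcober and Morigi, is that if a word were concise but not boundedly concise, one could find a sequence of groups $G_i$ with $|(G_i)_v| \le m$ but $|v(G_i)| \to \infty$, and then take a suitable ultraproduct. So first I would set $v = [w,{}_ny]$ and suppose for contradiction that $v$ is not boundedly concise in residually finite groups: there is an integer $m$ and a sequence of residually finite groups $G_i$ with $|(G_i)_v| \le m$ for all $i$, but $|v(G_i)| \to \infty$.

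Next I would pass to a non-principal ultraproduct $G = \prod G_i / \U$ over a non-principal ultrafilter $\U$ on the index set. The two crucial facts to verify are: (i) the set of $v$-values in $G$ is finite, with $|G_v| \le m$, and (ii) $v(G)$ is infinite. Point (i) follows because $v$ is a word of bounded length, so a $v$-value in the ultraproduct is represented coordinatewise by $v$-values in the $G_i$; since each $|(G_i)_v|\le m$, \L o\'s's theorem gives that $G_v$ has at most $m$ distinct elements. Point (ii) follows from $|v(G_i)|\to\infty$: for each $N$ the set of indices with $|v(G_i)|\ge N$ is in $\U$, and one shows $v(G)$ contains $N$ distinct elements, so $v(G)$ is infinite. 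The main obstacle is the passage from the ultraproduct back into the class of residually finite groups, since an ultraproduct of residually finite groups need not be residually finite. This is the heart of the argument: I would handle it by observing that each $G_i$ embeds into its profinite (or pro-$\mathcal{C}$) completion and instead take the ultraproduct of the finite quotients, or equivalently work with a residually finite group mapping onto the ultraproduct, so as to produce a genuinely residually finite group $H$ that still satisfies $|H_v|\le m$ yet $|v(H)|=\infty$, contradicting conciseness of $v$ in residually finite groups.

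Concretely, I would use that each $G_i$ is residually finite to choose, for each pair of distinct $v$-values and each nontrivial element of $v(G_i)$ one wishes to separate, a finite quotient; assembling these gives a residually finite approximation whose relevant $v$-value structure is preserved. The cleanest route is to take for $H$ the subgroup of the unrestricted product $\prod_i Q_i$ (where each $Q_i$ is a finite quotient of $G_i$ witnessing enough of the structure) generated by the images of generators, which is residually finite by construction, has at most $m$ values of $v$, and has $v(H)$ infinite. Then the conciseness of $[w,{}_ny]$ in residually finite groups (the result of \cite{dms-conciseness} quoted in the introduction) forces $v(H)$ to be finite, the desired contradiction. I expect the main technical difficulty to lie precisely in arranging that the residually finite witness $H$ simultaneously keeps $|H_v|\le m$ and forces $|v(H)|$ to be unbounded; the bounded length of the word $v=[w,{}_ny]$ and the uniformity coming from \L o\'s's theorem are what make this balancing act possible.
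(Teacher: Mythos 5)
Your proposal reduces bounded conciseness to conciseness via an ultraproduct, but this reduction is precisely the open problem, not an available tool. The Fern\'andez-Alcober--Morigi argument you invoke works for the class of \emph{all} groups because that class is closed under ultraproducts; for residually finite groups it breaks down at exactly the point you yourself flag as ``the heart of the argument,'' and the paper explicitly records that the implication ``concise in residually finite groups implies boundedly concise'' is only a \emph{conjecture} from \cite{fealcobershu}. Your proposed repair does not close the gap. Passing to finite quotients $Q_i$ of the $G_i$ does preserve $|(Q_i)_v|\le m$ and can be arranged to keep $|v(Q_i)|$ large, but the two ways of reassembling the $Q_i$ pull in opposite directions: the ultraproduct $\prod_i Q_i/\mathcal{U}$ has at most $m$ values of $v$ (tuples agreeing on a set in $\mathcal{U}$ are identified, so \L o\'s plus pigeonhole applies) but is no longer residually finite, whereas a subgroup $H$ of the unrestricted product $\prod_i Q_i$ is residually finite but its $v$-values sit in $\prod_i (Q_i)_v$ with no identification, so nothing bounds $|H_v|$ by $m$ --- already two coordinates can produce up to $m^2$ distinct values. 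No known construction keeps both properties at once; that is exactly why the conjecture remains open.

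The paper's proof of Theorem \ref{aa} is therefore a direct quantitative argument rather than a formal deduction from conciseness. Since $v(G)$ for a residually finite group is controlled by its finite quotients, and each finite quotient again has at most $m$ values of $v$, it suffices to prove a \emph{uniform} bound $|v(G)|\le\nu(v,m)$ for finite groups $G$. This is carried out by an explicit induction along the series of Proposition \ref{w-series}, with every step (Lemma \ref{zero}, Proposition \ref{var22}, Proposition \ref{ri}, Lemma \ref{H}, Lemma \ref{casolo}, Corollary \ref{short}) yielding bounds depending only on $v$ and $m$. To salvage your approach you would first have to prove the general conjecture; as written, your argument assumes what is to be proved.
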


The other purpose of this paper is to establish conciseness of somewhat more complicated words. 

\begin{theorem}\label{bb} Suppose that $w=w(x_1,\ldots,x_k)$ is a multilinear commutator word. For any prime-power $q$ and any positive integer $n\geq1$ the word $[w^q,{}_ny]$ is concise in residually finite groups.
\end{theorem}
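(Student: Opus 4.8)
The plan is to prove Theorem \ref{bb} by reducing the conciseness of $[w^q,{}_ny]$ to the already-established conciseness of $[w,{}_ny]$ from \cite{dms-conciseness}, exploiting the prime-power nature of $q$. Suppose $G$ is residually finite and the set of $[w^q,{}_ny]$-values is finite; I must show the verbal subgroup is finite. The key structural observation is that a $[w^q,{}_ny]$-value is of the form $[u,{}_ny]$ where $u=w(g_1,\ldots,g_k)^q$ is a $q$th power of a $w$-value. The first step is to reduce to a setting where we can control $w$-values themselves: I would consider the subgroup $H=\langle G_w\rangle=w(G)$ together with the action of conjugation, and aim to show that the set of $w$-values is "almost finite" modulo a controllable obstruction. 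The natural strategy is to set $K=\langle [w^q,{}_ny] \rangle_G$ (the normal closure of all such values), show $K$ is finite, and then pass to the quotient $G/K$, where every $[w^q,{}_ny]$-value is trivial, i.e. every $w^q$-value is right $n$-Engel with respect to every element.

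Second, I would work in the quotient $\bar G = G/K$ and analyze the consequence that $[\bar u^q,{}_ny]=1$ for every $w$-value $\bar u$ and every $y$. Here the prime-power hypothesis on $q$ becomes essential: one wants to deduce from "$u^q$ is $n$-Engel" a statement about $u$ itself, at least after localizing at the prime $p$ dividing $q$. Since the class of residually finite groups where $w$-values are well-behaved can be studied locally, I would split the analysis according to the prime $p$ with $q=p^a$. The crucial reduction should be: in a residually finite group, if $[w^q,{}_ny]$ takes finitely many values, then after bounding the finite obstruction, the $p$-part and the $p'$-part of each $w$-value can be treated separately, and on the $p'$-part raising to the power $q$ is essentially invertible, so $[w^q,{}_ny]$ being trivial forces $[w,{}_ny]$ to be trivial there. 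On the $p$-part one uses that $u^q=u^{p^a}$ interacts with the lower central/Engel structure in a way that is still governed by finitely many values.

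The main technical engine, as in \cite{dms-conciseness}, will be the combination of the positive solution of the restricted Burnside problem (Zelmanov's theorem) with a routine reduction to finitely generated, and then to finite, groups. Concretely, I would first reduce to the case of a finitely generated residually finite group by a standard argument (a word takes finitely many values in $G$ iff it does so in every finitely generated subgroup, and conciseness can be checked there), and then, using residual finiteness, pass to finite quotients. In a finite group the condition that $[w^q,{}_ny]$ takes at most $m$ values, combined with $w$ being a multilinear commutator word, should force a bound via Lie-theoretic methods: one associates to the situation a Lie algebra satisfying an Engel-type identity, applies Zelmanov's global nilpotency theorem for Engel Lie algebras, and translates back to obtain local nilpotency of the relevant verbal subgroup, yielding the desired finiteness.

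The hard part will be handling the power $q$ correctly in the Engel relation: unlike the word $[w,{}_ny]$ treated in \cite{dms-conciseness}, the $q$th power sits \emph{inside} the Engel commutator, so one cannot directly expand $[u^q,{}_ny]$ as a simple function of $[u,{}_ny]$. I expect that the decisive lemma is one asserting that, in a group where $[w^q,{}_ny]=1$ identically (on $w$-values) and which is residually finite, the subgroup generated by $w$-values is locally nilpotent-by-(finite exponent dividing a power of $q$), so that the finiteness of $[w,{}_ny]$-values can be recovered up to a bounded $p$-group obstruction. Making this reduction uniform across all finite quotients, so that the residual finiteness assembles the local conclusions into a global finiteness of $[w^q,{}_ny](G)$, is where the argument requires the most care, and it is precisely where the prime-power restriction on $q$ cannot be dropped.
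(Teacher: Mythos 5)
Your proposal contains two genuine gaps, each of which is fatal on its own. First, the step ``set $K=\langle G_v\rangle^{G}$, show $K$ is finite, and then pass to the quotient $G/K$'' is circular: since verbal subgroups are normal, $K$ is exactly $v(G)$, so ``show $K$ is finite'' is the full assertion of Theorem \ref{bb} with no indication of how to prove it, and everything you describe afterwards takes place in a quotient you are not entitled to (and would in any case be superfluous once $K$ is known to be finite). Second, the proposed reduction ``to finitely generated, and then to finite, groups'' is not available here. That reduction works for Theorem \ref{aa} only because there one proves \emph{bounded} conciseness, so a uniform bound on $|v(Q)|$ over all finite quotients $Q$ pulls back to $G$; for $[w^q,{}_ny]$ no such uniform bound is known (the paper explicitly records that bounded conciseness of this word is open), and knowing merely that $v(Q)$ is finite for every finite quotient $Q$ is vacuous. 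Consequently the paper's argument for Theorem \ref{bb} must work directly in the infinite residually finite group $G$, after reducing to the case where $v(G)$ is abelian (Lemma \ref{zero}) and then torsion-free.

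The rest of your sketch also misses the actual mechanism by which the prime-power hypothesis enters. Splitting a $w$-value into its $p$-part and $p'$-part presupposes torsion and makes no sense in a general (e.g.\ torsion-free) residually finite group, and the claim that on the $p'$-part ``raising to the power $q$ is essentially invertible'' cannot be made precise in this setting. What the paper actually does is: choose a normal subgroup $H$ of finite index meeting $G_v$ trivially, so that every $w^q$-value in $H$ is an Engel element; set $K=w^q(H)$, which is locally nilpotent by Proposition \ref{non22}; observe that $w(H)/K$ is generated by the normal commutator-closed set of images of $w$-values, which are $p$-elements because their $q$-th powers lie in $K$ --- this is precisely where $q$ being a prime power is used, via Proposition \ref{pp-order} and Lemma \ref{locgrad}, to conclude that $w(H)/K$ is locally finite; then Proposition \ref{soluble-by-fin} handles the soluble-by-finite quotient $G/w(H)$, and a centrality argument (Casolo's lemma, Schur's theorem, and torsion-freeness of $v(G)$) forces $v(G)=1$. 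None of these steps is recoverable from your outline, so the proposal cannot be completed as written.
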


We do not know if the word $[w^q,{}_ny]$ in the above theorem is boundedly concise in residually finite groups. Note that at present, among all words that are known to be concise in residually finite groups, the only ones whose bounded conciseness remains unconfirmed are the words $[w^q,{}_ny]$ dealt with in Theorem \ref{bb}.

Another open question related to Theorem \ref{bb} is whether the theorem remains valid if $q$ is allowed to be an arbitrary positive integer rather than a prime-power. It seems that at present we do not have sufficient tools to handle words of that kind.

The proofs of most of the known results on conciseness of words in residually finite groups use Zelmanov's theorem on nilpotency of finitely generated Lie algebras with a polynomial identity \cite{ze4}. This is also the case with both Theorem \ref{aa} and Theorem \ref{bb} of this article.

\section{Preliminaries}

Throughout the paper  we denote by $G'$ the commutator subgroup of a group $G$ and by $\langle M\rangle$ the subgroup generated by a subset $M\subseteq G$. In this section we collect some preliminary results which will be needed for the proofs of the main theorems.

A proof of the following lemma can for example be found in \cite{dms-conciseness}. 
\begin{lemma}\label{zero} Let $v$ be a word and $G$ a group such that the set of $v$-values in $G$ is finite with at most $m$ elements. Then the order of the commutator subgroup $v(G)'$ is $m$-bounded.
\end{lemma}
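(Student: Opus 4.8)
The plan is to exploit the conjugation-invariance of the value set together with Schur's theorem on centre-by-finite groups. Write $X=G_v$ and $H=v(G)=\langle X\rangle$, so that $|X|\le m$. First I would observe that $X$ is a normal subset of $G$: if $g=v(g_1,\dots,g_k)$ is a $v$-value and $x\in G$, then
\[
g^{x}=v(g_1,\dots,g_k)^{x}=v(g_1^{x},\dots,g_k^{x})
\]
is again a $v$-value, so conjugation by any element of $G$ permutes the (at most $m$) elements of $X$. In particular $X$ is a normal subset of $H$.

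Next I would identify the centraliser of $X$ in $H$ with the centre of $H$. If $h\in H$ commutes with every element of $X$, then, since $X$ generates $H$, the element $h$ commutes with every element of $H$; hence the centraliser $C_H(X)$ equals $Z(H)$. Now the conjugation action of $H$ on the finite set $X$ induces a homomorphism from $H$ into the symmetric group $\mathrm{Sym}(X)$ whose kernel is precisely $C_H(X)=Z(H)$. Consequently $H/Z(H)$ embeds into $\mathrm{Sym}(X)$, and therefore $[H:Z(H)]\le |X|!\le m!$.

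Finally I would invoke Schur's theorem: a group whose centre has finite index $n$ has finite derived subgroup of order bounded by a function of $n$ alone. Applying this with $n=[H:Z(H)]\le m!$ shows that $|v(G)'|=|H'|$ is $m$-bounded, which is the assertion; note that no residual finiteness hypothesis on $G$ is used, so the statement holds for arbitrary groups. The argument is essentially formal once the normality of $X$ is noticed, and the only point requiring slight care is quantitative: the classical form of Schur's theorem gives mere finiteness of $H'$, whereas here I must use a version supplying an \emph{explicit} order estimate in terms of $[H:Z(H)]$ (the Schur--Wiegold bound), since it is this effective bound, rather than finiteness alone, that yields the desired dependence on $m$.
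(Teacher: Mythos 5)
Your proof is correct and is essentially the standard argument: the paper itself only cites \cite{dms-conciseness} for this lemma, and the proof given there is exactly the one you describe (conjugation-invariance of $G_v$, the embedding of $v(G)/Z(v(G))$ into $\mathrm{Sym}(G_v)$, and the quantitative Schur--Wiegold bound). Nothing to add.
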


An important family of multilinear commutator words is formed by the derived words $\delta_k$, on $2^k$ variables, which are defined recursively by
$$\delta_0=x_1,\qquad \delta_k=[\delta_{k-1}(x_1,\ldots,x_{2^{k-1}}),\delta_{k-1}(x_{2^{k-1}+1},\ldots,x_{2^k})].$$
Of course $\delta_k(G)=G^{(k)}$, the $k$-th term of the derived series of $G$. We will need the following well-known results. 

\begin{lemma}\cite[Lemma 4.1]{S2}\label{lem:delta_k} Let $G$ be a group and  let $w$ be a multilinear commutator word on $k$ variables. Then each $\delta_k$-value is a $w$-value.
\end{lemma}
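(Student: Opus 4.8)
The plan is to argue by induction on the number $k$ of variables of $w$, after first isolating a substitution property of the derived words that drives the whole argument.

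First I would record the following \emph{nesting property}: for all integers $m\ge s\ge 0$, every $\delta_m$-value is a $\delta_s$-value. The point is that the complete binary tree of depth $m$ underlying $\delta_m$ is obtained from the depth-$s$ tree of $\delta_s$ by grafting a complete binary tree of depth $m-s$ onto each of its $2^s$ leaves; formally, $\delta_m$ is obtained from $\delta_s$ by replacing each variable $x_i$ with a copy of $\delta_{m-s}$ on a fresh block of $2^{m-s}$ variables. Consequently, evaluating $\delta_m$ in $G$ yields $\delta_s(y_1,\dots,y_{2^s})$, where each $y_i$ is a $\delta_{m-s}$-value and in particular an element of $G$; hence every $\delta_m$-value is a $\delta_s$-value. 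This identity is verified by a routine induction on $m-s$ using the recursive definition of $\delta_m$.

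Now for the main induction on $k$. If $k=1$, then $w=x_1$, every element of $G$ is a $w$-value, and there is nothing to prove. If $k\ge 2$, then by definition $w=[u,v]$, where $u$ and $v$ are multilinear commutator words on disjoint sets of $s$ and $k-s$ variables respectively, with $1\le s\le k-1$. A $\delta_k$-value has the form $[\delta_{k-1}(\mathbf a),\delta_{k-1}(\mathbf b)]$ for suitable tuples $\mathbf a,\mathbf b$ of elements of $G$. Since $k-1\ge s$ and $k-1\ge k-s$, the nesting property shows that $\delta_{k-1}(\mathbf a)$ is a $\delta_s$-value and $\delta_{k-1}(\mathbf b)$ is a $\delta_{k-s}$-value. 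By the induction hypothesis applied to $u$ (on $s<k$ variables) and to $v$ (on $k-s<k$ variables), these are respectively a $u$-value and a $v$-value: say $\delta_{k-1}(\mathbf a)=u(\mathbf p)$ and $\delta_{k-1}(\mathbf b)=v(\mathbf r)$. Therefore the original $\delta_k$-value equals $[u(\mathbf p),v(\mathbf r)]=w(\mathbf p,\mathbf r)$, a $w$-value, which completes the induction.

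The only real content is the nesting property; once it is in hand, the induction is forced by the top-level bracket decomposition $w=[u,v]$. The main point to get right is the bookkeeping in that decomposition: one must check that both $u$ and $v$ involve strictly fewer than $k$ variables and that $k-1$ dominates both $s$ and $k-s$, which is exactly what allows the nesting property to feed the induction hypothesis. I expect no serious obstacle beyond making the tree-substitution identity precise.
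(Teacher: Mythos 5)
Your proof is correct. The paper does not prove this lemma itself but simply cites \cite[Lemma 4.1]{S2}, and your argument --- the nesting property $\delta_m=\delta_s(\delta_{m-s},\dots,\delta_{m-s})$ combined with induction on the bracket decomposition $w=[u,v]$ --- is essentially the standard argument given in that reference, with the bookkeeping ($s<k$, $k-s<k$, and $k-1\ge\max\{s,k-s\}$) handled correctly.
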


\begin{lemma}\cite[Lemma 2.2]{fernandez-morigi} \label{inverse} Let $w$ be a multilinear commutator word. Then $G_w$ is symmetric, that is, 
$x\in G_w$
implies that $x^{-1}\in G_w$. 
\end{lemma}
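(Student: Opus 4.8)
The plan is to argue by induction on the number $k$ of variables occurring in $w$, exploiting the recursive structure of multilinear commutator words: either $w=x_1$, or else $w=[u,v]$, where $u$ and $v$ are multilinear commutator words involving two disjoint — and hence strictly smaller — subsets of the variables $x_1,\dots,x_k$. In the base case $w=x_1$ we have $G_w=G$, which is trivially closed under taking inverses, so there is nothing to prove.

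For the inductive step I would write $w=[u,v]$ and take an arbitrary $w$-value $c=[a,b]$, where $a\in G_u$ and $b\in G_v$. The crux is the commutator identity
\[
c^{-1}=[a,b]^{-1}=[a^{-1},b]^{a},
\]
which I would verify by the direct computation $a\,[a,b]^{-1}\,a^{-1}=a\,b^{-1}a^{-1}b=[a^{-1},b]$. By the inductive hypothesis the set $G_u$ is symmetric, so $a^{-1}\in G_u$; consequently $[a^{-1},b]$ is of the form $[\,u\text{-value},\,v\text{-value}\,]$ and is therefore itself a $w$-value.

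It then remains only to absorb the stray conjugation by $a$. Here I would invoke the fact that the value set of any commutator word is closed under conjugation, since conjugation by $a$ is an automorphism of $G$ while $w$ is an element of the free group: writing $a^{-1}=u(e_1,\dots,e_s)$ and $b=v(f_1,\dots,f_t)$, one obtains
\[
[a^{-1},b]^{a}=[u(e_1,\dots,e_s),v(f_1,\dots,f_t)]^{a}=[u(e_1^{a},\dots,e_s^{a}),v(f_1^{a},\dots,f_t^{a})],
\]
which is precisely the value $w(e_1^{a},\dots,e_s^{a},f_1^{a},\dots,f_t^{a})$ of $w$. Hence $c^{-1}\in G_w$, and the induction closes.

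There is no deep obstacle in this argument; the single point demanding care is that the natural passage from $[a,b]^{-1}$ to $[a^{-1},b]$ introduces a conjugating factor $a$, and one must notice two complementary facts simultaneously: that the inductive hypothesis is exactly what supplies $a^{-1}\in G_u$, and that the conjugation-invariance of commutator-word values lets the factor $a$ be distributed inside the word. Choosing to invert the first entry (rather than the second) is a matter of convenience, since the symmetric identity $[a,b]^{-1}=[a,b^{-1}]^{b}$ would let one use the inductive symmetry of $G_v$ instead; either route completes the proof.
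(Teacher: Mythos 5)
Your proof is correct and is essentially the standard argument behind this lemma, which the paper states by citation to \cite{fernandez-morigi} rather than proving: induction on the recursive decomposition $w=[u,v]$ into multilinear commutators in disjoint variables, the identity $[a,b]^{-1}=[a^{-1},b]^{a}$ (your computation checks out), and the closure of any verbal value set under conjugation, with the inductive hypothesis supplying $a^{-1}\in G_u$. Nothing further is needed.
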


An element $g$ of a group $G$ is called a (left) Engel element if for any $x\in G$ there exists $n=n(g,x)\geq 1$ such that $[x,_n g]=1$. 
If $n$ can be chosen independently of $x$, then $g$ is a (left) $n$-Engel element. 
 The  next lemma is a combination of a well-known theorem of Baer  \cite[Satz~III.6.15]{hup} and a result due to Gruenberg \cite{gruen}. 

\begin{lemma}\label{fitting} Let $G$ be a group generated by (left) Engel elements.
\begin{enumerate}
\item If $G$ is finite, then it is nilpotent.
\item If $G$ is soluble, then it is locally nilpotent.
\end{enumerate}
\end{lemma}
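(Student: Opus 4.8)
The plan is to deduce each part directly from the structure theory of Engel elements, the point being that the hypothesis ``generated by (left) Engel elements'' is precisely what is needed to collapse $G$ onto a nilpotent, respectively locally nilpotent, subgroup. In both cases the real work is done by the cited theorems, and the passage from them to the lemma is a one-line containment argument.

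For part (1) I would invoke Baer's theorem, which asserts that in a finite group the left Engel elements are exactly the elements of the Fitting subgroup $F(G)$. Since $G$ is by hypothesis generated by left Engel elements, each generator lies in the subgroup $F(G)$; hence $G\le F(G)\le G$, so that $G=F(G)$. As $F(G)$ is nilpotent by definition, $G$ is nilpotent.

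For part (2) the corresponding input is Gruenberg's theorem, to the effect that in a soluble group the left Engel elements form a locally nilpotent normal subgroup. Granting this, the argument is the same: since $G$ is generated by left Engel elements and these elements constitute a subgroup, $G$ must coincide with that subgroup and is therefore locally nilpotent.

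The substantive content, and hence the main obstacle, lies entirely in the two cited theorems rather than in the deduction from them. The difficulty is that being a left Engel element is a purely element-wise condition in which the integer $n=n(g,x)$ is allowed to depend on $x$, and a priori there is no reason for the set of such elements to be closed under multiplication, let alone to generate a nilpotent group; in an arbitrary group the left Engel elements need not form a subgroup at all. It is exactly the finiteness assumption in (1) and the solubility assumption in (2) that force these elements to assemble into $F(G)$ and into a locally nilpotent normal subgroup respectively, and I would import this analytic heart of the matter from \cite{hup} and \cite{gruen}.
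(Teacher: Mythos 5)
Your proposal is correct and matches the paper's intent exactly: the paper gives no proof of this lemma, simply citing Baer's theorem (Huppert, Satz III.6.15) and Gruenberg's result, which are precisely the two inputs you invoke, with the same one-line deduction that a group generated by elements of a nilpotent (resp.\ locally nilpotent) subgroup equals that subgroup. Nothing further is needed.
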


An element $g\in G$ is a right Engel element if for each $x\in G$ there exists a positive integer $n$ such that $[g,{}_nx]=1$. If $n$ can be chosen independently of $x$, then $g$ is a right $n$-Engel element. The next observation is due to Heineken (see \cite[12.3.1]{Rob}). 

\begin{lemma}\label{heineken} Let $g$ be a right $n$-Engel element in a group $G$. Then $g^{-1}$ is a left $(n+1)$-Engel element.
\end{lemma}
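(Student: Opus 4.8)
The plan is to first reduce the left Engel condition on $g^{-1}$ to the vanishing of a single right-normed commutator in $g$, and then to kill that commutator using the right $n$-Engel law. I work with the convention $[x,y]=x^{-1}y^{-1}xy$ and two identities that hold in every group and are verified in one line by writing out the words: $[a,b^{-1}]=[b,a]^{b^{-1}}$ and $[u^{b^{-1}},b^{-1}]=[u,b^{-1}]^{b^{-1}}$.

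First I would prove, by induction on $m\ge1$, the recursion $[y,{}_{m}g^{-1}]=[[g,y],{}_{m-1}g^{-1}]^{g^{-1}}$ for all $y\in G$. The case $m=1$ is exactly the first identity. For the inductive step, write $[y,{}_{m-1}g^{-1}]=D^{g^{-1}}$ with $D=[[g,y],{}_{m-2}g^{-1}]$ (induction hypothesis); then $[y,{}_{m}g^{-1}]=[D^{g^{-1}},g^{-1}]=[D,g^{-1}]^{g^{-1}}$ by the second identity, which is the desired expression. Iterating this recursion $n+1$ times peels off one conjugation by $g^{-1}$ at each step and replaces $y$ by $[g,y]$, giving $[y,{}_{n+1}g^{-1}]=\kappa_{n+1}^{\,g^{-(n+1)}}$, where $\kappa_0=y$ and $\kappa_{j}=[g,\kappa_{j-1}]$, so that $\kappa_{n+1}=[g,[g,\ldots,[g,y]\ldots]]$ is the right-normed commutator with $n+1$ occurrences of $g$. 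Thus the lemma is equivalent to the assertion $\kappa_{n+1}=1$ for every $y\in G$.

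The substantive step is to deduce $\kappa_{n+1}=1$ from the hypothesis $[g,{}_{n}x]=1$ for all $x\in G$, and I would do this by induction on $n$ (the case $n=1$ being immediate, since then $g$ is central and $\kappa_{2}=1$). The difficulty is that in $\kappa_{n+1}$ the element $g$ plays the \emph{repeated} role, whereas in the Engel law $[g,{}_{n}x]=1$ it plays the role of the \emph{base}; so the argument must effect a reversal of these two roles, together with the shift from $n$ to $n+1$. The mechanism is to polarize the law --- substituting products for $x$ and extracting the cross terms by means of $[a,bc]=[a,c][a,b]^{c}$ and the Hall--Witt identity --- which forces strong collapsing and symmetry relations among the iterated commutators of $g$ with other elements. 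For instance, when $n=2$ the law is equivalent to the statement that $[x,g]$ commutes with $x$ for every $x$, and a short computation then yields $\kappa_{2}=[y,{}_{2}g]$ and $\kappa_{3}=[y,{}_{3}g]^{-1}$, so that the claim becomes the Engel-collection identity $[y,{}_{3}g]=1$. Carrying out this bookkeeping for general $n$ --- that is, showing that the consequences of the right $n$-Engel law annihilate the right-normed commutator $\kappa_{n+1}$ --- is the main obstacle, since it is precisely here that the base-role and the repeated-role of $g$ must be interchanged. Once $\kappa_{n+1}=1$ is established, the displayed identity gives $[y,{}_{n+1}g^{-1}]=1$ for all $y$, which is exactly the statement that $g^{-1}$ is a left $(n+1)$-Engel element.
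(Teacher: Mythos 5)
Your reduction is correct as far as it goes: the identities $[y,g^{-1}]=[g,y]^{g^{-1}}$ and $[u^{g^{-1}},g^{-1}]=[u,g^{-1}]^{g^{-1}}$ do give the recursion $[y,{}_{m}g^{-1}]=[[g,y],{}_{m-1}g^{-1}]^{g^{-1}}$, and iterating it yields $[y,{}_{n+1}g^{-1}]=\kappa_{n+1}^{\,g^{-(n+1)}}$, so the lemma is indeed equivalent to the vanishing of the right-normed commutator $\kappa_{n+1}$. But the proposal stops exactly where the proof has to happen. The entire content of the lemma is the deduction of $\kappa_{n+1}=1$ from the right $n$-Engel law, and for that step you offer only the trivial case $n=1$, an unverified sketch for $n=2$, and the admission that carrying out the ``bookkeeping'' for general $n$ is ``the main obstacle.'' That is an acknowledged gap, not an argument: ``polarizing'' the law via $[a,bc]=[a,c][a,b]^{c}$ and Hall--Witt is a method one might try, but no inductive scheme is formulated, and nothing is shown to force $\kappa_{n+1}=1$. (The paper itself gives no proof either --- it cites Heineken via Robinson 12.3.1 --- so the comparison is with the classical argument.)

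The irony is that your own recursion, stopped after a \emph{single} unfolding, finishes the proof in two lines, and the ``role reversal'' you single out as the essential difficulty never has to be performed. From $[y,{}_{n+1}g^{-1}]=[[g,y],{}_{n}g^{-1}]^{g^{-1}}$ write $[g,y]=g^{-1}g^{y}$. Since $g^{-1}$ commutes with itself, the identity $[bc,d]=[b,d]^{c}[c,d]$ with $b=d=g^{-1}$ and $c=g^{y}$ gives $[g^{-1}g^{y},g^{-1}]=[g^{y},g^{-1}]$, and hence $[g^{-1}g^{y},{}_{n}g^{-1}]=[g^{y},{}_{n}g^{-1}]$. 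Now $g^{y}$ is again a right $n$-Engel element, since $[g^{y},{}_{n}z]=[g,{}_{n}z^{y^{-1}}]^{y}=1$ for every $z\in G$; taking $z=g^{-1}$ yields $[g^{y},{}_{n}g^{-1}]=1$, whence $[y,{}_{n+1}g^{-1}]=1$. In other words, after one unfolding the left-normed commutator already has as its base (up to the harmless factor $g^{-1}$) a \emph{conjugate} of $g$, to which the hypothesis applies verbatim with the repeated entry $g^{-1}$; there is no exchange of the base role and the repeated role, and the full reduction to $\kappa_{n+1}$ --- which converts an easy lemma into a genuinely delicate commutator identity --- is unnecessary.
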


A proof of the next two results can be found in \cite{dms-conciseness}. 

\begin{lemma}\label{normal closure}
Let $G=\langle g,t\rangle$ be a group such that $[g,{}_nt]=1$. Then the normal closure of the subgroup $\langle g\rangle$ in the group $G$ is generated by the set $\{g^{t^i}|i=0,\dots, n-1\}$.
\end{lemma}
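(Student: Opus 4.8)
The plan is to work with the iterated commutators $c_i=[g,{}_it]$, so that $c_0=g$ and, by hypothesis, $c_n=1$. Write $H=\langle g^{t^i}\mid 0\le i\le n-1\rangle$ for the subgroup in question and $N$ for the normal closure of $\langle g\rangle$ in $G$. I would first observe that $N$ is generated by \emph{all} the conjugates $g^{t^i}$, $i$ ranging over the integers: this set is visibly closed under conjugation by $t$ (which shifts the exponent), and it contains $g$, hence is closed under conjugation by $g$; so it generates a normal subgroup containing $g$, which must be $N$. Thus proving the lemma reduces to showing that the bi-infinite family of conjugates is already contained in $H$, i.e. that $H$ is invariant under conjugation by both $t$ and $t^{-1}$.

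The bridge between the relation $c_n=1$ and the subgroup $H$ is the identity $c_i^{t}=c_ic_{i+1}$, which follows at once from $c_{i+1}=[c_i,t]=c_i^{-1}c_i^{t}$. A short induction on this identity shows that each $c_i$ is a word in $g^{t^0},\dots,g^{t^i}$ (conjugating by $t$ shifts all exponents up by one, and one then multiplies by $c_i^{-1}$), and conversely that each $g^{t^i}$ lies in $\langle c_0,\dots,c_i\rangle$. Consequently $H=\langle c_0,\dots,c_{n-1}\rangle=:K$, and it suffices to prove that $K$ is normal in $G$.

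Normality of $K$ I would establish generator by generator. Invariance under $g$ and $g^{-1}$ is immediate since $g=c_0\in K$. For $t$, the identity $c_i^{t}=c_ic_{i+1}$ gives $c_i^{t}\in K$ for $0\le i\le n-2$, while $c_n=1$ yields $c_{n-1}^{t}=c_{n-1}c_n=c_{n-1}\in K$; hence $K^{t}\le K$. The one point requiring care — and the step I expect to be the main obstacle — is invariance under $t^{-1}$, since conjugation by $t$ has so far only been shown to map $K$ \emph{into} itself. I would resolve this by noting that the map $c_i\mapsto c_ic_{i+1}$ (with $c_n=1$) is onto the generating set: one recovers $c_{n-1}$, then $c_{n-2}=(c_{n-2}c_{n-1})c_{n-1}^{-1}$, and so on downward, so that every $c_i$ lies in the image. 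Thus conjugation by $t$ restricts to a surjective, hence bijective, endomorphism of $K$, whence $K^{t}=K$ and therefore $K^{t^{-1}}=K$ as well.

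Putting these together, $K$ is invariant under $g^{\pm1}$ and $t^{\pm1}$, hence normal in $G=\langle g,t\rangle$; since $g\in K$ we get $N\le K$, while each $c_i$ is an iterated commutator in $g$ and $t$ and so lies in $N$, giving $K\le N$. Hence $N=K=H$, which is exactly the asserted description of the normal closure.
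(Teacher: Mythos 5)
Your proof is correct. Note that the paper itself gives no argument for this lemma, deferring to \cite{dms-conciseness}; your reduction to the commutators $c_i=[g,{}_it]$ via $c_i^t=c_ic_{i+1}$ is essentially the standard argument given there, and you correctly isolate and resolve the only delicate point, namely that $K^t\le K$ must be upgraded to $K^t=K$ (by the downward induction recovering each $c_i$ from $c_i^t$ and $c_{i+1}$) before invariance under $t^{-1}$ can be concluded.
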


\begin{lemma}\label{criterion for nilpotency}
 Let $G=U\langle t\rangle$ be a group that is a product of a normal subgroup $U$ and a cyclic 
subgroup $\langle t\rangle$. Assume that  $U$ is nilpotent of class $c$ and there exists a generating set $A$ of $U$ such that 
$[a,{}_nt]=1$ for every $a\in A$. Then $G$ is nilpotent of $(c,n)$-bounded class. 
\end{lemma}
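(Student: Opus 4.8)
The plan is to induct on the nilpotency class $c$ of $U$, the crux being to control how the hypothesis on the generators propagates to all of $U$. Throughout, $T=\langle t\rangle$ acts on $U$ by conjugation (as $U\trianglelefteq G$), and on each $T$-invariant abelian section the operation $u\mapsto [u,t]=u^{-1}u^{t}$ induces the additive map $\tau-1$, where $\tau$ is the automorphism induced by $t$; thus a relation $[u,{}_k t]=1$ records that $(\tau-1)^{k}$ acts trivially on the relevant section.

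I would first dispose of the base case $c=1$. Here $U$ is abelian and $T$-invariant, hence a $\mathbb{Z}[T]$-module generated by $A$, so $(\tau-1)^{n}$ kills every generator and therefore all of $U$; that is, $[U,{}_n t]=1$. Since $U$ is abelian one checks inductively that $\gamma_{j+1}(G)=[U,{}_j t]$ for every $j\geq 1$, whence $\gamma_{n+1}(G)=1$ and $G$ is nilpotent of class at most $n$.

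The heart of the argument is the following propagation step, which I would isolate: if $U$ has class $c$ and $[a,{}_n t]=1$ for all $a\in A$, then $[z,{}_D t]=1$ for every $z\in\gamma_c(U)$, where $D=c(n-1)+1$. Here $\gamma_c(U)\leq Z(U)$ is abelian, and since $\gamma_{c+1}(U)=1$ the commutator map $\beta\colon (U/U')^{\times c}\to\gamma_c(U)$, $(\overline{x}_1,\dots,\overline{x}_c)\mapsto [x_1,\dots,x_c]$, is well defined and multilinear, with image generating $\gamma_c(U)$. Writing $V=U/U'$ with its automorphism $\sigma=\tau$, the action of $t$ on $\gamma_c(U)$ corresponds through $\beta$ to $\sigma^{\otimes c}$ on $V^{\otimes c}$. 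As in the base case $(\sigma-1)^{n}=0$ on $V$, and I would expand $\sigma^{\otimes c}-1=\sum_{s=1}^{c}S_s$ as a telescoping sum of the $c$ pairwise-commuting operators $S_s=1^{\otimes(s-1)}\otimes(\sigma-1)\otimes\sigma^{\otimes(c-s)}$, each of which satisfies $S_s^{\,n}=0$. The multinomial expansion then forces $(\sigma^{\otimes c}-1)^{D}=0$, which transcribes back to $[z,{}_D t]=1$ on $\gamma_c(U)$. This step, and in particular the observation that the $t$-action on the top layer is unipotent with index bounded in terms of $c$ and $n$, is where I expect the real work to lie; the rest is routine commutator manipulation.

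With this in hand I would close the induction. Set $Z=\gamma_c(U)$, a $T$-invariant subgroup central in $U$, and pass to $\overline{G}=G/Z$: here $\overline{U}=U/Z$ has class at most $c-1$ and is generated by the images of $A$, so by the inductive hypothesis $\overline{G}$ is nilpotent of some $(c,n)$-bounded class $e$, i.e.\ $\gamma_{e+1}(G)\leq Z$. Since $Z\leq Z(U)$, for every $k\geq e+1$ we have $[\gamma_k(G),U]=1$ and hence $\gamma_{k+1}(G)=[\gamma_k(G),G]=[\gamma_k(G),t]$, so that $\gamma_{e+1+j}(G)\leq [Z,{}_j t]$. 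Taking $j=D$ and invoking the propagation step yields $\gamma_{e+1+D}(G)=1$, so $G$ is nilpotent of class at most $e+D$, a $(c,n)$-bounded quantity, completing the induction.
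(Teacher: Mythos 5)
Your proof is correct. The paper itself defers to \cite{dms-conciseness} for this lemma, and the argument there follows the same strategy you use --- induction on the nilpotency class of $U$, with the crux being that $t$ acts unipotently of $(c,n)$-bounded index on $\gamma_c(U)$ because that subgroup is the image of the multilinear commutator map on $U/U'$ --- so your tensor-power packaging of the multilinear expansion (the telescoping decomposition of $\sigma^{\otimes c}-1$ into commuting nilpotent summands) is essentially a clean reformulation of the same idea rather than a genuinely different route.
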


For a subgroup $A$ of a group $G$, an element  $x \in G$ and a positive  integer $n$, we write $[A,{}_n x]$ to denote the subgroup generated by all elements $[a,{}_n x]$, with $a\in A$. The following lemma is due to Casolo.

\begin{lemma}\cite[Lemma 6]{Casolo} \label{casolo}  Let $A$ be an abelian group, and let $x$ be an automorphism of $A$ such that $[A,{}_nx]=1$ for some $n\ge1$. If $x$ has finite order $q$, then $[A^{q^{n-1}},x]=1$.
\end{lemma}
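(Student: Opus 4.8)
The plan is to recast the statement as an annihilator computation over the integral group ring. I would write $A$ additively and regard it as a module over $\mathbf{Z}[\langle x\rangle]$, letting $t$ denote the operator by which $x$ acts; since $x$ has order $q$, the operator $t^q$ acts as the identity, so $(t^q-1)A=0$. Under the standard translation of commutators into operator language, $[a,x]=-a+a^x=(t-1)a$, and therefore $[a,{}_jx]=(t-1)^ja$ for every $j\ge1$ and every $a\in A$. Consequently the hypothesis $[A,{}_nx]=1$ says exactly that $(t-1)^nA=0$, while the desired conclusion $[A^{q^{n-1}},x]=1$ says exactly that $q^{n-1}(t-1)A=0$. (The choice of commutator convention is immaterial: replacing $t$ by $t^{-1}$ only multiplies $t-1$ by the unit $-t^{-1}$, and so alters neither the hypothesis nor the conclusion.)

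Next I would record the single polynomial identity that drives the whole argument. Put $\Phi(t)=1+t+\cdots+t^{q-1}$, so that $(t-1)\Phi(t)=t^q-1$ and hence $(t-1)\Phi(t)A=0$. Evaluating at $t=1$ gives $\Phi(1)=q$, so we may write $\Phi(t)=q+(t-1)h(t)$ for some $h(t)\in\mathbf{Z}[t]$. Substituting this into $(t-1)\Phi(t)A=0$ yields, for every $a\in A$,
\[
q(t-1)a=-(t-1)^2h(t)a\in(t-1)^2A,
\]
that is, $q(t-1)A\subseteq(t-1)^2A$. This is the mechanism that "trades one factor of $q$ for one extra factor of $t-1$", and it is where the order $q$ of $x$ enters the conclusion.

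The heart of the proof is then an induction on $k\ge0$ establishing $q^{k}(t-1)A\subseteq(t-1)^{k+1}A$. The case $k=0$ is trivial. For the inductive step, fix $a\in A$ and set $b=-h(t)a\in A$; using the displayed relation and then the inductive hypothesis applied to $b$,
\[
q^{k+1}(t-1)a=q^{k}\bigl(q(t-1)a\bigr)=(t-1)\cdot q^{k}(t-1)\bigl(-h(t)a\bigr)\in(t-1)\cdot(t-1)^{k+1}A=(t-1)^{k+2}A,
\]
which completes the induction. Taking $k=n-1$ and invoking the hypothesis $(t-1)^nA=0$ gives $q^{n-1}(t-1)A\subseteq(t-1)^nA=0$, which is precisely $[A^{q^{n-1}},x]=1$.

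The argument is short and I do not anticipate a genuine obstacle; the only point requiring care is the bookkeeping in the induction, namely keeping $b=-h(t)a$ inside $A$ so that the inductive hypothesis may be applied to it, and checking that each stage raises the exponent of $t-1$ by exactly one. Everything rests on the elementary observation $\Phi(1)=q$, which is what forces the factor $q^{n-1}$—with $q$ the (not necessarily prime-power) order of $x$—to appear in the conclusion.
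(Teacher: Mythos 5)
Your proof is correct. Note that the paper offers no argument of its own for this lemma: it is quoted verbatim from Casolo \cite[Lemma 6]{Casolo}, so there is no in-paper proof to compare against. Your group-ring reduction is the standard argument for this fact: writing $\Phi(t)=1+t+\cdots+t^{q-1}=q+(t-1)h(t)$ and using $(t-1)\Phi(t)A=0$ to trade each factor $q$ for an extra factor $t-1$ is exactly the mechanism behind Casolo's statement (equivalently obtained from the binomial expansion of $t^q-1=\bigl((t-1)+1\bigr)^q-1$). The induction is sound because $A$ is a module over the commutative ring $\mathbf{Z}[t]/(t^q-1)$, so $q$, $t-1$ and $h(t)$ all commute and the element $b=-h(t)a$ stays in $A$; your translations of both hypothesis ($(t-1)^nA=0$) and conclusion ($q^{n-1}(t-1)A=0$) are also accurate, and, as you observe, the argument needs only that $q$ is the finite order of $x$, not that it is a prime power.
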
 

The following proposition is a consequence of a  result from \cite{var} (see \cite[Proposition 1]{dms-conciseness}). 
\begin{proposition}\label{var22} Given positive integers $d,q,n$ and a multilinear commutator word $w$, let $G$ be a finite group in which the $w^q$-values are $n$-Engel. Suppose that a subgroup $H$ can be generated by $d$ elements which are $w^q$-values. Then $H$ is nilpotent with $(d,q,n,w)$-bounded class.
\end{proposition}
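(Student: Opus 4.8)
The plan is to translate the problem into Lie-algebra terms and then apply Zelmanov's theorem on the nilpotency of finitely generated PI Lie algebras, which is the tool singled out in the introduction. Write $a_1,\dots,a_d$ for the $w^q$-values generating $H$. By hypothesis each $a_i$ is an $n$-Engel element of $G$, hence a left $n$-Engel element of $H$, so by Baer's theorem (Lemma~\ref{fitting}(1)) the finite group $H$ is nilpotent. The entire force of the statement therefore lies in making the nilpotency class independent of the particular $H$, depending only on $d,q,n$ and $w$.

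To obtain such a bound I would pass to the associated graded Lie ring $L=\bigoplus_{i\ge1}\gamma_i(H)/\gamma_{i+1}(H)$, whose nilpotency class coincides with that of $H$. Since $L$ is generated by its first homogeneous component, and that component is generated by the images $\bar a_1,\dots,\bar a_d$, the Lie ring $L$ is generated by $d$ elements. The standard correspondence between group commutators and Lie-ring commutators turns the relation $[x,{}_na_i]=1$ into ad-nilpotency of $\bar a_i$: each $\bar a_i$ satisfies $(\mathrm{ad}\,\bar a_i)^{m}=0$ for some $m$ bounded in terms of $n$. Thus $L$ is a Lie ring generated by $d$ ad-nilpotent elements of bounded index, and by Zelmanov's theorem \cite{ze4} it will be nilpotent of class bounded in terms of $d$, $n$ and the degree of a polynomial identity, as soon as one knows that $L$ satisfies such an identity of bounded degree. (Decomposing $L$ according to its primary components as an abelian group, one may even argue prime by prime, which is convenient for exploiting the prime-power hypothesis.)

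Producing that polynomial identity is the crux, and the only place where the hypotheses that $w$ is a multilinear commutator and that $q$ is a prime power are genuinely used. Here I would invoke Lemma~\ref{lem:delta_k} to replace $w$ by a derived word $\delta_k$, reducing to the situation where the relevant $w^q$-values include the $q$-th powers of derived-word values, and then use the prime-power hypothesis on $q$ (through a Casolo-type $q$-power argument as in Lemma~\ref{casolo}) to force, modulo terms of higher weight, a bounded multilinear Lie identity on the homogeneous components built from these values; this is precisely the content extracted from the result of \cite{var}. I expect this step, namely converting the Engel condition on $w^q$-values into a bounded polynomial identity for $L$ uniformly in $H$, to be the main obstacle. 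Once it is in place, Zelmanov's theorem yields a $(d,q,n,w)$-bounded nilpotency class for $L$, and hence for $H$, which completes the proof.
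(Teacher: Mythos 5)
The paper does not actually prove this proposition: it is imported verbatim from \cite[Proposition 1]{dms-conciseness}, where it is in turn derived from the main results of \cite{var}. So your sketch can only be measured against the strategy of those sources, which is indeed the Zelmanov-based route you outline: pass to an associated graded Lie ring generated by the images of $a_1,\dots,a_d$, obtain ad-nilpotency from the Engel condition and a polynomial identity from the hypotheses, and invoke the quantitative form of Zelmanov's theorem. At that level of generality your plan is the right one.

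There are, however, two concrete gaps. First, Zelmanov's nilpotency theorem in the form used throughout this literature requires that \emph{every Lie commutator in the generators} be ad-nilpotent of bounded index, not merely the generators themselves. Your argument only yields ad-nilpotency of the $\bar a_i$ (from $[x,{}_n a_i]=1$), and a group commutator such as $[a_i,a_j]$ is in general not a $w^q$-value, so the Engel hypothesis says nothing about it directly. Overcoming this is the actual content of the result in \cite{var}: one must work with generators drawn from a commutator-closed set (essentially $\delta_j$-values, via Lemma \ref{lem:delta_k}) and prove separately that an element whose $q$-th power is left $n$-Engel is ad-nilpotent of $(q,n)$-bounded index in the relevant Lie rings. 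Second, your account of where the polynomial identity comes from is off target: the proposition assumes only that $q$ is a positive integer (the prime-power restriction appears only in Theorem \ref{bb}), so no ``prime-power hypothesis on $q$'' is available, and a Casolo-type power argument is not the mechanism. The identity is obtained from the fact that $G$ satisfies the group identity $[w^q(x_1,\dots,x_k),{}_n y]\equiv 1$, which by the Wilson--Zelmanov linearization procedure forces the associated Lie algebras over the prime fields to satisfy a polynomial identity of degree depending only on $q$, $n$ and $w$. As written, the crux step of your proof is stated as an expectation rather than carried out, and the route you indicate for it would not go through.
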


We will also require the next variation on the same theme.

\begin{proposition}\cite[Proposition 14]{BSTT}\label{non22}  Given positive integers $q,n$ and a multilinear commutator word $w$, let $G$ be a residually finite group in which the $w^q$-values are $n$-Engel. Suppose that a subgroup $H$ can be generated by finitely many Engel elements. Then $H$ is nilpotent.
\end{proposition}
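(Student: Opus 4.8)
The plan is to reduce the assertion to the solubility of $H$ and then to obtain that solubility through the associated Lie algebra together with Zelmanov's theorem. Write $H=\langle h_1,\dots,h_d\rangle$, where each $h_i$ is a (left) Engel element; as a subgroup of $G$, the group $H$ is residually finite. The first observation is that it suffices to prove that $H$ is soluble. Indeed, the $h_i$ are Engel elements of $H$ as well, so $H$ is generated by finitely many Engel elements; if $H$ is soluble, then by Lemma \ref{fitting}(2) it is locally nilpotent, and being finitely generated it is therefore nilpotent. Since $H$ is residually finite, solubility of $H$ will in turn follow once we exhibit a single integer $\ell=\ell(d,q,n,w)$ bounding the derived length of every finite quotient of $H$: then $H^{(\ell)}$ lies in every finite-index normal subgroup and hence is trivial.

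To bound the derived length of a finite quotient $\bar H$ uniformly, note that $\bar H$ is generated by the images of the $h_i$, which are again Engel elements (a homomorphic image of a left Engel element is a left Engel element), so $\bar H$ is nilpotent by Lemma \ref{fitting}(1). A finite nilpotent group is the direct product of its Sylow subgroups, and the derived length of a direct product is the maximum of the derived lengths of the factors; thus it is enough to bound the derived length of an arbitrary finite $p$-group quotient $P$ of $H$, independently of the prime $p$. In such a $P$ the $w^q$-values are $n$-Engel, being images of $w^q$-values of $G$, and $P$ is generated by $d$ elements.

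For a fixed such $P$ I would pass to the restricted Lie algebra $L=L(P)$ of characteristic $p$ associated with the Jennings--Zassenhaus series of $P$; it is generated by $d$ elements and has the same derived length as $P$, so it remains to bound the derived length of $L$. By Lemma \ref{lem:delta_k} the $\delta_k$-values of $P$ are $w$-values, and their $q$-th powers are $w^q$-values, hence $n$-Engel. Using Lemma \ref{casolo} to pass from these $q$-th powers back to the $\delta_k$-values themselves -- the step in which the hypothesis that $q$ is a prime power is used -- one deduces that the homogeneous elements of $L$ arising from $\delta_k$-values are ad-nilpotent of index bounded in terms of $q$ and $n$ only. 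The $n$-Engel condition also yields, after linearisation (the mechanism behind Proposition \ref{var22} and the result from \cite{var} on which it rests), a nontrivial polynomial identity of $L$. With a polynomial identity in force and the $\delta_k$-values ad-nilpotent of bounded index, the Lie-theoretic form of Zelmanov's theorem \cite{ze4} shows that the $k$-th derived subalgebra $L^{(k)}$, which is generated by the $\delta_k$-values, is nilpotent of class bounded in terms of $d,q,n,w$. Consequently $L$, and therefore $P$, is soluble of bounded derived length, and this bound is independent of $p$.

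Collecting these bounds produces the required uniform bound $\ell$ on the derived length of all finite quotients of $H$, whence $H$ is soluble and the argument concludes as above. I expect the main obstacle to be precisely the translation carried out in the third paragraph: extracting from the group-theoretic Engel condition on the $q$-th powers of $w$-values a genuine ad-nilpotency statement for the corresponding homogeneous Lie elements, \emph{uniformly over all primes $p$}. Controlling the interaction between the exponent $q$ and the characteristic $p$, in particular the case $p\mid q$, is the delicate point, and it is here that Casolo's Lemma \ref{casolo} and the assumption that $q$ is a prime power are indispensable; once $L$ is known to satisfy a polynomial identity and to have its $\delta_k$-values ad-nilpotent of bounded index, the boundedness of the derived length follows from the by-now standard application of Zelmanov's theorem.
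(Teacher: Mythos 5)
The paper does not prove this statement: it is imported verbatim from \cite[Proposition 14]{BSTT}, so there is no internal proof to measure your attempt against. Judged on its own terms, your opening reduction (pass to finite quotients of the residually finite group $H$, note that they are nilpotent by Baer's theorem, split into Sylow subgroups, and seek a derived-length bound for $d$-generated finite $p$-quotients that is independent of $p$) is the standard first move in this area and is sound. The problem is the Lie-theoretic core. Zelmanov's nilpotency theorem, in the form used throughout this literature, applies to a Lie algebra that satisfies a polynomial identity and is generated by \emph{finitely many} elements all of whose iterated commutators are ad-nilpotent. You apply it to $L^{(k)}$, ``generated by the $\delta_k$-values''; but $L^{(k)}$ is not finitely generated (the derived subalgebra of a $d$-generated Lie algebra admits no bounded generating set), so the theorem does not apply. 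Nor can you retreat to $L=L(P)$ itself: its generators are the images of the $h_i$ in $D_1/D_2$, and the $h_i$ are arbitrary Engel elements of $G$ with no uniform Engel length, hence carry no ad-nilpotency at all. This is exactly the obstruction that forces Proposition \ref{var22} to assume that the generators are themselves $w^q$-values; your statement drops that assumption, and nothing in your argument replaces it.

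There are secondary inaccuracies as well. The proposition is stated (and holds in \cite{BSTT}) for an arbitrary positive integer $q$; the prime-power hypothesis belongs to Theorem \ref{bb}, not here, so your claim that it is ``indispensable'' at the ad-nilpotency step signals that the wrong tool is being invoked --- Lemma \ref{casolo} concerns finite-order automorphisms of abelian groups and is not the device that converts an Engel condition on $w^q$-values into ad-nilpotency of homogeneous Lie elements. Finally, the assertion that $L(P)$ ``has the same derived length as $P$'' is unjustified: the derived series of the graded algebra attached to the Jennings--Zassenhaus filtration does not track the derived series of $P$ degree by degree, and deducing a bound for $P$ from one for $L(P)$ requires an argument. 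The net effect is that the central step --- a $(d,q,n,w)$-bounded derived length for the finite $p$-quotients --- is not established.
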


The following lemma is taken from \cite{danilo}.
\begin{lemma}\cite[Lemma 4.1]{danilo}\label{danilo} Let G be a group generated by $d$ elements which are $n$-Engel. Suppose that $G$ is soluble with derived length $s$. Then $G$ is nilpotent with $(d,n,s)$-bounded class.
\end{lemma}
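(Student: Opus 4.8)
The plan is to argue by induction on the derived length $s$. When $s=1$ the group $G$ is abelian, hence nilpotent of class $1$, and nothing is to be proved. So suppose $s\ge2$ and that the statement holds for soluble groups of derived length at most $s-1$. Set $A=G^{(s-1)}$, the last nontrivial term of the derived series; this is an abelian normal subgroup and $\bar G=G/A$ has derived length at most $s-1$. Since being an $n$-Engel element is inherited by quotients, $\bar G$ is generated by $d$ elements which are $n$-Engel, so by the inductive hypothesis $\bar G$ is nilpotent of $(d,n,s-1)$-bounded class, say $c$. Consequently $\gamma_{c+1}(G)\le A$, whence $\gamma_{c+1+j}(G)=[\gamma_{c+1}(G),{}_jG]\le[A,{}_jG]$ for every $j\ge0$. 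It therefore suffices to prove that $[A,{}_mG]=1$ for some $(d,n,c)$-bounded integer $m$: this forces $\gamma_{c+1+m}(G)=1$ and bounds the nilpotency class of $G$ by $c+m$.

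This reduces the problem to a statement about the conjugation action of $G$ on the abelian group $A$. Let $g_1,\dots,g_d$ be the $n$-Engel generators and put $Q=G/C_G(A)$. Since $A$ is abelian we have $A\le C_G(A)$, so $Q$ is a quotient of $\bar G$; hence $Q$ is nilpotent of class at most $c$ and is generated by the images of $g_1,\dots,g_d$. Writing $A$ additively as a right $\mathbb Z[Q]$-module, the condition that each $g_i$ is $n$-Engel becomes $A(g_i-1)^n=0$, while $[A,{}_mG]=A\,I^m$, where $I$ is the augmentation ideal of $\mathbb Z[Q]$. As $Q=\langle g_1,\dots,g_d\rangle$, the ideal $I$ is generated by the elements $g_i-1$, so $[A,{}_mG]$ is generated by the left-normed commutators $[a,g_{j_1},\dots,g_{j_m}]$ with $a\in A$ and each $g_{j_l}\in\{g_1,\dots,g_d\}$. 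The goal is to show that all such commutators of sufficiently large, $(d,n,c)$-bounded, length vanish.

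The main obstacle is precisely this last step. When $Q$ is abelian the operators $g_i-1$ commute, a product of $m$ of them rearranges as $(g_1-1)^{a_1}\cdots(g_d-1)^{a_d}$ with $\sum a_i=m$, and as soon as $m>d(n-1)$ some exponent reaches $n$ and annihilates $A$; thus $m=d(n-1)+1$ works. In general the operators do not commute, but the nilpotency of $Q$ of class $c$ controls the commutation defect: using the relation $g_ig_j=g_jg_i[g_i,g_j]$ one rewrites $(g_i-1)(g_j-1)-(g_j-1)(g_i-1)$ as $g_jg_i([g_i,g_j]-1)$, and the factor $[g_i,g_j]-1$ involves only elements of $\gamma_2(Q)$; iterating pushes the discrepancies down the lower central series of $Q$, which terminates after $c$ steps. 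A collection process of this kind rewrites any product $(g_{j_1}-1)\cdots(g_{j_m}-1)$ as a $(d,n,c)$-bounded combination of ordered products in which, once $m$ exceeds a threshold depending only on $d$, $n$ and $c$, some generator necessarily occurs at least $n$ times in a block; invoking $A(g_i-1)^n=0$ then kills every term. This yields the required bound on $m$ and completes the induction. The delicate point throughout is to keep the number of generators and the class $c$ fixed while bounding $m$, and it is exactly this uniform bookkeeping in the collection argument that I expect to be the technical heart of the proof.
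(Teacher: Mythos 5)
This statement is quoted from \cite[Lemma 4.1]{danilo}; the present paper gives no proof of it, so there is nothing in-paper to compare against, and your proposal has to stand on its own. Its opening reduction is fine: induction on $s$, passing to $A=G^{(s-1)}$, using that $n$-Engel elements map to $n$-Engel elements in quotients, and translating $[A,{}_mG]=1$ into the vanishing of $A\,I^m$ for the augmentation ideal of $\mathbb Z[Q]$ with $Q=G/C_G(A)$ nilpotent of class $c$. That is a correct and natural first step.

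The gap is in the step you yourself flag as ``the technical heart'': the claim that after collection, once $m$ is large enough, \emph{some generator necessarily occurs at least $n$ times in a block}. This is not what the collection process delivers. Rewriting $(g_i-1)(g_j-1)$ as $(g_j-1)(g_i-1)+g_jg_i([g_i,g_j]-1)$ introduces factors $(h-1)$ with $h\in\gamma_2(Q)$, and iterating produces factors $(h-1)$ for basic commutators $h$ of every weight up to $c$. The hypothesis $A(g_i-1)^n=0$ says nothing about such $h$. A fully collected term of total weight $m$ can perfectly well have each $(g_i-1)$ occurring fewer than $n$ times while carrying arbitrarily many factors $([g_i,g_j]-1)$, $([g_i,g_j,g_k]-1)$, etc.; nothing in your hypotheses annihilates those, so largeness of $m$ alone does not kill the term. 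To close this you would need a separate argument (for instance an induction on the class $c$ of $Q$) showing that commutators in the generators are themselves boundedly Engel on $A$ --- e.g.\ that a product of boundedly many conjugates of $n$-Engel elements is $k$-Engel on the abelian normal subgroup $A$ for $(n,c,d)$-bounded $k$ --- and that is a genuine piece of mathematics, not bookkeeping. A secondary, fixable omission: the units that arise during collection turn the factors $(g_i-1)$ into $(g_i^u-1)$, so you must invoke the (true, easy) fact that conjugates of $n$-Engel elements are $n$-Engel; your write-up only assumes the condition for the chosen generators. As it stands, the proposal proves the lemma only when the quotient $Q$ is abelian (e.g.\ the metabelian case $s=2$), and the general inductive step is asserted rather than proved.
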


\begin{lemma}\label{H}
Let $n,d,q$ be positive integers. Suppose that $w=w(x_1,\ldots,x_k)$ is a multilinear commutator word and $v=[w^q,{}_ny]$.
Let $G$ be a residually finite group such that 
 $v(G)$ is abelian.
Let $g_1, \dots , g_d$ be $w^q$-values which are right $(n+1)$-Engel.
Then for every $t \in G$ the subgroup $\langle g_1, \dots , g_d, t \rangle$ is nilpotent of   $(d,n,w,q)$-bounded class.
\end{lemma}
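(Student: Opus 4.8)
The plan is to reduce to the case where $G$ is finite and then combine the structural lemmas collected above. Since $H:=\langle g_1,\dots,g_d,t\rangle$ lies in the residually finite group $G$, in order to prove that $H$ is nilpotent of class at most some $(d,n,w,q)$-bounded number $c$ it suffices to prove this for the image $\bar H$ of $H$ in every finite quotient $G/M$ with $M\trianglelefteq G$ of finite index: if $1\ne x\in\gamma_{c+1}(H)$ one may, by residual finiteness, choose $M$ with $x\notin M$, and then $\bar x\in\gamma_{c+1}(\bar H)=1$ is a contradiction. In $G/M$ the images of the $g_i$ are still $w^q$-values that are right $(n+1)$-Engel, and $v(G/M)=\overline{v(G)}$ is still abelian, so all hypotheses persist. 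Hence I may assume that $G$ is finite.

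First I would organise the generators. Because each $g_i$ is right $(n+1)$-Engel we have $[g_i,{}_{n+1}t]=1$, so by Lemma \ref{normal closure} the normal closure of $\langle g_i\rangle$ in $\langle g_i,t\rangle$ is generated by the conjugates $g_i^{t^j}$ with $0\le j\le n$. Set $U:=\langle g_i^{t^j} : 1\le i\le d,\ 0\le j\le n\rangle$. This subgroup has at most $d(n+1)$ generators, each of which is a $w^q$-value (a conjugate of a $w^q$-value is a $w^q$-value); by the previous sentence it is normalised by $t$, and it contains every $g_i$, so $U\trianglelefteq H$ and $H=U\langle t\rangle$. Moreover $[g_i^{t^j},{}_{n+1}t]=[g_i,{}_{n+1}t]^{t^j}=1$, so the chosen generating set $A$ of $U$ satisfies $[a,{}_{n+1}t]=1$ for every $a\in A$.

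The crucial step is to bound the derived length of $U$, and here I would pass to $\bar G:=G/v(G)$. Since every $v$-value of $\bar G$ is the image of a $v$-value of $G$, we have $v(\bar G)=1$; equivalently, every $w^q$-value of $\bar G$ is right $n$-Engel. As the set of $w^q$-values is symmetric (Lemma \ref{inverse} applied to $w$, together with $q$-th powers), Lemma \ref{heineken} then shows that every $w^q$-value of $\bar G$ is in fact left $(n+1)$-Engel. The image $\bar U$ of $U$ in $\bar G$ is generated by the $w^q$-values $\overline{g_i^{t^j}}$, so Proposition \ref{var22} (with Engel degree $n+1$ and $d(n+1)$ generators) yields that $\bar U$ is nilpotent of $(d,n,w,q)$-bounded class. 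Since $v(G)$ is abelian, $U\cap v(G)$ is abelian and $U/(U\cap v(G))\cong\bar U$ is soluble of bounded derived length, whence $U$ is soluble of $(d,n,w,q)$-bounded derived length $s$.

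Finally I would make $U$ nilpotent of bounded class and transfer this to $H$. By Lemma \ref{heineken} each $(g_i^{t^j})^{-1}$ is left $(n+2)$-Engel, and these inverses again generate $U$; as $U$ is soluble of derived length $s$, Lemma \ref{danilo} gives that $U$ is nilpotent of $(d,n,w,q)$-bounded class $c_1$. Now $H=U\langle t\rangle$ with $U$ normal and nilpotent of class $c_1$, and $A$ is a generating set of $U$ with $[a,{}_{n+1}t]=1$ for all $a\in A$, so Lemma \ref{criterion for nilpotency} shows that $H$ is nilpotent of $(c_1,n+1)$-bounded, hence $(d,n,w,q)$-bounded, class. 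I expect the main obstacle to be the derived-length bound: the device of passing to $G/v(G)$ to convert the hypotheses ``$v(G)$ abelian'' and ``$g_i$ right $(n+1)$-Engel'' into the global statement that \emph{all} $w^q$-values of the quotient are left $(n+1)$-Engel, which is precisely what is needed to apply Proposition \ref{var22}.
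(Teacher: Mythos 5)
Your proposal is correct and follows essentially the same route as the paper's proof: reduce to the finite case, take the normal closure $U$ of $\langle g_1,\dots,g_d\rangle$ generated by $d(n+1)$ conjugates via Lemma \ref{normal closure}, use Lemma \ref{heineken} with Lemma \ref{inverse} and Proposition \ref{var22} in $G/v(G)$ to bound the derived length, then apply Lemma \ref{danilo} and finally Lemma \ref{criterion for nilpotency}. Your write-up is in fact slightly more careful than the paper's at two points (the explicit reduction to finite quotients, and passing to the inverses of the generators before invoking Lemma \ref{danilo}).
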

\begin{proof}
It is sufficient to prove the result in the case where $G$ is finite. 
Let $H$ be the  normal closure of the subgroup  $\langle g_1, \dots , g_d \rangle$ in 
 $\langle g_1, \dots , g_d, t \rangle$. By Lemma \ref{normal closure}, $H$ is
  generated by at most $(n+1)d$ conjugates of the elements $g_1, \dots , g_d$, which are all  right $(n+1)$-Engel. 
Note that, by Lemma \ref{heineken} and Lemma \ref{inverse}, all $w^q$-values are 
left  $(n+1)$-Engel in  $G/v(G)$.
 As $Hv(G)/v(G)$ is generated by $(n+1)d$ elements which are $w^q$-values, by Proposition \ref{var22} it is 
 nilpotent with $(d, n,w,q)$-bounded class.
   Thus $H$ is soluble with  $(d, n,w,q)$-bounded derived length.
 As $H$ is generated by at most $(n+1)d$ element which are $(n+2)$-Engel, 
 by applying Lemma \ref{danilo}, we obtain
 that $H$ is  nilpotent with $(d,n,w,q)$-bounded class. 
Now the result follows from Lemma \ref{criterion for nilpotency}. 
\end{proof}

The next lemma can be seen as an analogue of Theorem \ref{aa} for soluble groups. The result holds for any $q\ge 1$ and $G$ here is not required to be residually finite. 

\begin{lemma}\cite[Lemma 11]{dms-conciseness}\label{soluble} Let $m,n,q,s$ be positive integers. Suppose that $w=w(x_1,\ldots,x_k)$ is a multilinear commutator word and $v=[w^q,{}_ny]$. Assume that $G$ is a soluble group of derived length $s$ such that $v$ has at most $m$ values in $G$. Then the order of $v(G)$ is $(v,m,s)$-bounded.
\end{lemma}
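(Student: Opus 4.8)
The plan is to prove the bound by induction on the derived length $s$, after two simplifying reductions, with the decisive work confined to an abelian bottom layer. The governing principle is that it suffices to bound the order of a \emph{single} $v$-value.

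\emph{Reductions.} Only finitely many elements of $G$ are needed to witness the at most $m$ distinct $v$-values, and these elements generate a finitely generated soluble subgroup $G_0$ of derived length at most $s$ with $v(G_0)=v(G)$ and at most $m$ values of $v$; so I may assume $G$ is finitely generated. Next, by Lemma \ref{zero} the order of $v(G)'$ is $m$-bounded, and $v(G)'$ is normal in $G$, so on passing to $G/v(G)'$ I may assume $v(G)$ is abelian. Since $v(G)$ is then abelian and generated by the at most $m$ values of $v$, a bound $e=e(v,m,s)$ on the order of each value yields $|v(G)|\le e^{m}$. Thus the whole problem is to bound the order of an individual $v$-value.

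\emph{Induction on $s$.} If $s=1$ then $G$ is abelian and $v(G)=1$. For $s\ge 2$ put $N=G^{(s-1)}$, an abelian normal subgroup, so $G/N$ has derived length at most $s-1$. The images of the values of $v$ are precisely the values of $v$ in $G/N$, at most $m$ of them, so by induction $|v(G)N/N|$ is $(v,m,s-1)$-bounded; consequently a bounded power of each $v$-value already lies in $W:=v(G)\cap N$. Everything therefore reduces to bounding the abelian, $G$-invariant subgroup $W$ inside the module $N$.

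\emph{The bottom layer (main obstacle).} This is the heart of the matter, and the step I expect to cost the most. Here I would exploit the shape of the word: since $[w^{q},{}_{n}y]\in v(G)$ for every $w$-value and every $y$, all $w^{q}$-values are right $n$-Engel modulo $v(G)$; as these values are closed under inversion (Lemma \ref{inverse}), Lemma \ref{heineken} shows their inverses act as $(n+1)$-Engel automorphisms on the abelian section $N$. Because every $w^{q}$-value is a $q$-th power, a Casolo-type power argument (Lemma \ref{casolo}) should reduce this action to a piece supported on a $q$-power submodule; combined with the commutator identities expressing $[w^{q},{}_{n}y]$ as an explicit module element and with the finiteness of the value set, this ought to pin down the torsion and bound $|W|$. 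The genuinely delicate point is that the Engel information a priori controls $N$ \emph{above} $W$ (that is, $N/W$), so the induction and the module bookkeeping must be arranged so as to make that information bear on $W$ itself; it is also precisely here that Casolo's lemma, being valid for automorphisms of \emph{any} finite order, lets the argument run for every $q\ge1$ rather than only for prime powers, matching the generality of the statement. Once $|W|$ is bounded, the reductions above return a $(v,m,s)$-bound on $|v(G)|$.
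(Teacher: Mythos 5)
There is a genuine gap. Your reductions (pass to $G/v(G)'$ via Lemma \ref{zero}, reduce to bounding the order of a single $v$-value, induct on $s$) are fine, but the entire content of the lemma sits in your ``bottom layer'' paragraph, and that paragraph is a list of hopes (``should reduce'', ``ought to pin down'') rather than an argument; you even flag yourself that the Engel information controls $N/W$ rather than $W$ and that ``the bookkeeping must be arranged'' to fix this, without saying how. The specific missing idea is that the derived series is the wrong series to induct on. What makes the argument run is a normal series of $w(G)$ whose abelian sections are generated by $w$-values \emph{all of whose powers are again $w$-values} (Proposition \ref{series}, or the $\U_j$-series of Proposition \ref{w-series}). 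For such a generator $g$ of the bottom section $L$ and any $t\in G$, every $[g^{iq},{}_n t]=[g^q,{}_n t]^i$ is itself a $v$-value (here one uses that $L$ is abelian and normal and that $g^i$ is again a $w$-value), so the pigeonhole on the at most $m$ values forces $[g^q,{}_n t]$ to have order at most $m$. This is the step that converts finiteness of the value set into torsion, and it is unavailable for $N=G^{(s-1)}$, whose generators bear no relation to $w$-values. Your invocation of Casolo's lemma is also misdirected: it is applied to the conjugation automorphism $t$ of bounded order $r$ (bounded because $C_G(v(G))$ has index at most $m!$) acting on the abelian group $v(G)\cap L$ after one knows $[v(G)\cap L,{}_n t]=1$; it has nothing to do with $w^q$-values being $q$-th powers.

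For the record, the paper does not reprove this lemma (it cites \cite[Lemma 11]{dms-conciseness}), but the proof of the closely parallel Proposition \ref{soluble-by-fin} in Section 5 shows the intended chain in full: series as in Proposition \ref{series} $\Rightarrow$ bounded order of $[g,{}_n t]$ for the power-closed generators $g$ of the last term $\Rightarrow$ factor out a bounded subgroup so that $[L,{}_n t]=1$ $\Rightarrow$ Casolo and another bounded quotient to centralize $v(G)\cap L$ $\Rightarrow$ Schur to centralize $v(G)$ $\Rightarrow$ $w^q$-values become right $(n+1)$-Engel, $\langle g^q,t\rangle$ is nilpotent of bounded class, and Corollary \ref{short} bounds the order of an arbitrary $v$-value $[g^q,{}_n t]$. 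Essentially none of this chain is present in your sketch, so as written the proposal does not constitute a proof.
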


A proof of the next lemma can be found in \cite{dms-conciseness}.
\begin{lemma}\cite[Lemma 10]{dms-conciseness}\label{Pavel} Let $w=w(x_1,x_2,\dots,x_k)$ be a word. Let $G$ be a nilpotent group of class $c$ generated by $k$ elements $a_1,a_2,\dots,a_k$. Denote by $X$ the set of all conjugates in $G$ of elements of the form $w(a_1^i,a_2^i,\dots,a_k^i)$, where $i$ ranges over the set of all integers, and assume that $X$ is finite with at most $m$ elements. Then $|\langle X\rangle|$ is $(c,m)$-bounded.
\end{lemma}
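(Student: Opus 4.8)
The plan is to bound the orders of the generators of $\langle X\rangle$ and then invoke a standard finiteness bound for nilpotent groups. First I would record the easy structural facts: the set $X$ is closed under conjugation, so $N:=\langle X\rangle$ is a normal subgroup of $G$ generated by at most $m$ elements; and since $G=\langle a_1,\dots,a_k\rangle$ is nilpotent, it is a finitely generated nilpotent group. Writing $u_i=w(a_1^i,\dots,a_k^i)$, every element of $X$ is a conjugate of some $u_i$ and hence has the same order as that $u_i$. Thus it suffices to show that $\mathrm{ord}(u_i)$ is $(c,m)$-bounded uniformly in $i$: once this is known, $N$ is a nilpotent group of class at most $c$ generated by at most $m$ elements each of order dividing some $(c,m)$-bounded number $e$, and such a group has $(c,m)$-bounded order (a finitely generated nilpotent group whose generators are torsion is finite, and its order is bounded in terms of the class, the number of generators and $e$). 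Note that the bound produced this way depends only on $c$ and $m$, not on $k$ or $w$.

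The main tool is the polynomiality of power and word maps on finitely generated nilpotent groups. Via the Hall--Petrescu collection formula, after fixing coordinates along the lower central series of $G$, the map $i\mapsto u_i$ has coordinates that are integer-valued polynomials in $i$ of $c$-bounded degree, vanishing at $i=0$ (since $u_0=w(1,\dots,1)=1$). The combinatorial heart of the argument is the following numerical claim, which I would isolate as a lemma: \emph{let $A$ be a finitely generated abelian group and $f\colon\mathbb Z\to A$ a map whose coordinates are integer-valued polynomials of degree at most $D$ with $f(0)=0$; if $\{f(i):i\in\mathbb Z\}$ has at most $m$ elements, then $\langle f(i):i\in\mathbb Z\rangle$ has $(D,m)$-bounded order.} To prove it, write $f(i)=\sum_{l=1}^{D}\binom{i}{l}a_l$, so that $B:=\langle f(i):i\rangle=\langle a_1,\dots,a_D\rangle$ has rank at most $D$; projecting onto the torsion-free quotient $A/T(A)$ shows the $a_l$ are torsion, because a polynomial map into a torsion-free abelian group with finite image is constant, hence zero. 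Thus $B$ is finite; projecting to each of its at most $D$ cyclic direct factors reduces everything to the case $A=B=\mathbb Z/n$ with the images of the $a_l$ generating $\mathbb Z/n$. Reducing modulo a prime $p\mid n$ turns $f$ into a nonconstant polynomial function on $\mathbb F_p$ of degree $<p$ once $p>D$, whose image therefore has size at least $p/D$; since this image has at most $m$ elements, every prime divisor of $n$ is at most $\max(D,Dm)$. A similar count for the polynomial $f$ modulo $p^d$, whose coefficients generate $\mathbb Z/p^d$, shows that its image size grows with $d$, forcing $d$ to be $(D,m)$-bounded. Combining these bounds gives $|B|\le(D,m)$-bounded.

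With these in hand I would bound $\mathrm{ord}(u_i)$ by induction on the class $c$. For $c=1$ the group is abelian and $u_i=u_1^{\,i}$, so $\{u_i\}=\langle u_1\rangle$ has at most $m$ elements and $\mathrm{ord}(u_i)\le m$. For $c\ge2$, put $A=\gamma_c(G)$, which is central and abelian, and pass to $G/A$, of class $c-1$; by induction $\mathrm{ord}(\bar u_i)\le M$ for some $(c-1,m)$-bounded $M$, so $v_i:=u_i^{M}$ lies in $A$ for every $i$. The map $i\mapsto v_i$ is again given by integer-valued polynomials of $c$-bounded degree (the coordinates of a fixed power are polynomial in the coordinates of the element), it vanishes at $i=0$, and it takes at most $m$ values; the numerical claim applied inside the abelian group $A$ yields that $\langle v_i\rangle$ has $(c,m)$-bounded order, whence $\mathrm{ord}(v_i)$, and therefore $\mathrm{ord}(u_i)\le M\cdot\mathrm{ord}(v_i)$, is $(c,m)$-bounded. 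This completes the induction and, by the first paragraph, the proof.

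The main obstacle is the numerical claim, and within it the bound on the prime-power divisors $p^d$ of $n$: unlike the linear case, finitely many values of a single element do not by themselves bound its order, so one must genuinely exploit that the \emph{entire} sequence $(f(i))_{i\in\mathbb Z}$ stays inside a set of size at most $m$, via a lower bound for the image size of a bounded-degree polynomial over $\mathbb Z/p^d$ with unit-generating coefficients. A secondary technical point requiring care is the clean statement that the coordinates of $i\mapsto u_i$ (and of its fixed powers) are integer-valued polynomials of $c$-bounded degree; this is standard via collection, but one must track that the degree is bounded in terms of $c$ alone, so that the final bound depends only on $c$ and $m$.
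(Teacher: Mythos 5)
The paper offers no proof of this lemma to compare against: it is quoted verbatim from \cite{dms-conciseness} with the proof left to that reference. Judged on its own, your strategy is viable and the surrounding reductions are correct: reducing to a uniform bound on the orders of the $u_i=w(a_1^i,\dots,a_k^i)$, noting that a class-$c$ nilpotent group on at most $m$ generators of bounded order has bounded order, and running the induction on the class through $A=\gamma_c(G)$ are all fine, as is your (important) remark that the final bound must not depend on $k$ or $w$.

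There are, however, two genuine gaps. The serious one is the prime-power step of your numerical claim --- the part you yourself identify as the combinatorial heart and then merely assert: ``a similar count for the polynomial $f$ modulo $p^d$ \dots shows that its image size grows with $d$.'' It is not a similar count. For $p\le D$ the value $\binom{i}{l}\bmod p^d$ is not even a function of $i\bmod p^d$ (e.g.\ $\binom{0}{2}=0$ while $\binom{4}{2}\equiv 2\pmod 4$), so there is no polynomial function on $\mathbb{Z}/p^d$ whose image you can count; and even for $p>D$, value sets over $\mathbb{Z}/p^d$ are not controlled by degree alone as over a field --- one needs a Hensel-type argument (locate $i_0$ with $f'(i_0)$ a unit, which exists since $f'\bmod p$ is a nonzero polynomial of degree $<p$, and deduce injectivity on $i_0+p\mathbb{Z}/p^d\mathbb{Z}$, forcing image size at least $p^{d-1}$). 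A cleaner repair bypasses primes entirely: induct on $D$ via finite differences. Since $f(i+1)-f(i)-\bigl(f(1)-f(0)\bigr)=\sum_{l=1}^{D-1}\binom{i}{l}a_{l+1}$ takes at most $m^2$ values, induction bounds $\lvert\langle a_2,\dots,a_D\rangle\rvert$; modulo that subgroup $f(i)\equiv ia_1$ takes at most $m$ values, so $a_1$ has order at most $m$ there. The second, lesser gap is the polynomiality claim when $G$ has torsion: ``coordinates along the lower central series'' is not well defined if the factors are not free abelian, and when you write $v_i=u_i^M\in A$ as $\sum_l\binom{i}{l}b_l$ you must justify that the $b_l$ lie in $A$. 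The clean formulation is that $i\mapsto u_i$ and $i\mapsto u_i^M$ are polynomial sequences of $c$-bounded degree in the sense of iterated difference operators $g(i)\mapsto g(i)^{-1}g(i+1)$ (the number of steps to trivialize being bounded by the class alone, independently of the length of $w$); once the $v_i$ all lie in the abelian group $A$, the discrete Taylor expansion gives $b_l=\Delta^l v(0)\in A$ for free. With these two repairs your argument closes.
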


\begin{corollary}\label{short} Let $m,n,c$ be positive integers and $K$ a nilpotent group of class at most $c$. Let $g\in K$ and denote by $Y$ the set of conjugates of elements of the form $[g,{}_nx]$, where $x\in K$. Assume that $Y$ is finite with at most $m$ elements. Then each element in $Y$ has finite order bounded by a function 
of $c$
and $m$.
\end{corollary}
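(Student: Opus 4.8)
The plan is to reduce to bounding the order of a single element and then exploit commutator collection. Since conjugate elements have equal order, it suffices to bound the order of $a=[g,{}_n x]$ for each fixed $x\in K$, and, replacing $K$ by the finitely generated subgroup $G=\langle g,x\rangle$, I may assume that $G$ is nilpotent of class at most $c$ and generated by two elements (writing $\gamma_j(G)$ for the $j$-th term of the lower central series). Because $[g,{}_n x]\in\gamma_{n+1}(G)$, the element $a$ is trivial once $n\ge c$; hence only the range $n<c$ is relevant, which is exactly why the final bound will depend on $c$ and $m$ alone and not separately on $n$.

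The key observation is that the elements $b_i:=[g,{}_n x^i]$ all lie in $Y$ (take the varying argument to be $x^i\in K$), so the set $\{b_i:i\text{ an integer}\}$ has at most $m$ elements. On the other hand, a routine commutator collection in a nilpotent group yields the congruence $b_i\equiv a^{i^n}\pmod{\gamma_{n+2}(G)}$: passing from $g$ upward through the lower central series, each commutation with $x^i$ multiplies the leading exponent by $i$ and raises the weight by one, so after $n$ steps the leading term is $a^{i^n}$. Thus modulo $\gamma_{n+2}(G)$ the finite set $\{b_i\}$ coincides with $\{\bar a^{\,i^n}\}$, which has at most $m$ elements; applying the pigeonhole principle to $\bar b_0,\dots,\bar b_m$ produces $0\le s<t\le m$ with $\bar b_s=\bar b_t$, whence $\bar a^{\,t^n-s^n}=1$. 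Therefore the order of $a$ modulo $\gamma_{n+2}(G)$ divides $t^n-s^n\le m^n$, a bound depending only on $c$ and $m$ since $n<c$.

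When $G$ has class at most $n+1$ this already finishes the argument, because then $\gamma_{n+2}(G)=1$ and the displayed congruence is an equality. In general I would proceed by induction on the nilpotency class: quotienting by the central subgroup $\gamma_c(G)$ lowers the class by one and, since the image of $Y$ still has at most $m$ elements, the inductive hypothesis bounds the order of $a$ modulo $\gamma_c(G)$; it then remains to bound the order of the resulting central power of $a$. The main obstacle is precisely this last point --- controlling the higher-weight correction terms that the collection formula produces above the leading layer --- and I expect to handle it by the same Hall--Petrescu bookkeeping that underlies Lemma \ref{Pavel}, applied layer by layer down the lower central series. At each layer the relevant exponents are values of a fixed integer-valued polynomial in $i$ of degree at most $c$, so finiteness of $\{b_i\}$ bounds each contribution in terms of $c$ and $m$; multiplying these finitely many bounds together yields the desired $(c,m)$-bound on the order of $a$, and hence on the order of every element of $Y$.
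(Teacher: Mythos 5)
Your reduction to $G=\langle g,x\rangle$, the remark that $a=[g,{}_nx]$ is trivial once $n\ge c$, and the leading-term computation $b_i=[g,{}_nx^i]\equiv a^{i^n}\pmod{\gamma_{n+2}(G)}$ followed by pigeonhole are all correct, and they do bound the order of $a$ modulo $\gamma_{n+2}(G)$ by $m^{c}$. The problem is everything after that. Your induction on the class only yields $a^{N}\in\gamma_c(G)$ for a bounded $N$, and you still have to bound the order of that central element; you yourself call this ``the main obstacle'' and then dispose of it with a single sentence asserting that finiteness of $\{b_i\}$ ``bounds each contribution'' layer by layer. That assertion is precisely what needs proving and is not routine: expanding $b_i$ as a product of fixed commutators $c_j^{\,f_j(i)}$ of increasing weight, each coincidence $b_s=b_t$ gives one relation that mixes all the $c_j$'s of a given weight, and disentangling them (so as to control $a^{N}$) requires the full finite-difference/collection induction that constitutes the proof of Lemma \ref{Pavel}. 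As written, you are re-deriving a variant of Lemma \ref{Pavel} --- with only one variable raised to the $i$-th power --- without actually carrying out its proof, so there is a genuine gap.

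The paper closes exactly this gap with a short rewriting that you could adopt instead. For $t\in K$ one has $[g,{}_nt]=[t^{-g},{}_{n-1}t]^{t}$, so $[g,{}_nt]$ is, up to conjugacy, a value of the two-variable word $u=[x_1,{}_{n-1}x_2]$ at the generators $t^{-g},\,t$ of the $2$-generated subgroup $\langle t^{-g},t\rangle$. Since $(t^{-g})^i=(t^i)^{-g}$, one gets $u((t^{-g})^i,t^i)=[g,{}_nt^i]^{t^{-i}}\in Y$, so the hypothesis of Lemma \ref{Pavel} --- where \emph{all} generators are raised to the $i$-th power --- is satisfied verbatim, and that lemma at once gives a $(c,m)$-bound on the order of $[g,{}_nt]$. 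The whole point of the conjugation trick is to avoid redoing the collection bookkeeping: it converts your partially specialized family $[g,{}_nx^i]$ into one to which the already-established Lemma \ref{Pavel} applies directly.
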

\begin{proof} Choose $t\in K$. Note that 
$$[g,{}_{n}t]=[(t^{-1})^{g t},{}_{n-1} t]=[t^{-g},{}_{n-1}t]^{t},$$ and so $[g,{}_{n}t]$ is a value of the word $u=[x_1,_{n-1}x_2]$ in the subgroup $\langle t^{-g},t\rangle$ of $K$. As $u((t^{-g})^i,t^i)=[g,{}_{n}t^i]^{t^{-i}}\in Y$, the set of all conjugates in $\langle t^{-g},t\rangle$ of elements of the form $u((t^{-g})^i,t^i)$, where $i$ is any  integer, has at most $m$ elements. It follows from Lemma \ref{Pavel} that the element $[g,{}_{n}t]$ has $(c,m)$-bounded order, as required. 
\end{proof}

\section{Combinatorics of multilinear commutators}

Let $k,n$ be positive integers. Throughout this section, $w=w(x_1,\dots,x_k)$ denotes an arbitrary but fixed multilinear commutator
on $k$ variables while $v$ denotes the word $v=[w,{}_ny]$.

If $\U$ is a set of words and $G$ is a group, then $\U(G)$ will denote the subgroup generated by all values in $G$ of the words 
$u \in \U$.  

We denote by $\mathbf{I}$ the set of all $k$-tuples $(i_1,\dots,i_k)$, where all entries $i_s$ are non-negative integers. 
We will view $\mathbf{I}$ as a partially ordered set with the partial order defined by the rule that $$(i_1,\dots,i_k)\leq(j_1,\dots,j_k)$$ 
if and only if $i_1\leq j_1,\dots,i_k\leq j_k$.

Given 
 $\mathbf{i}=(i_1,\ldots,i_k) \in \mathbf{I}$, we write
\[
w_{\mathbf{i}}=w(\delta_{i_1},\ldots,\delta_{i_k})
\]
 where the variables appearing in the $\delta_{i_j}$'s are renamed in such a way that they are all different. 
Further, let 
\[
\W_{\mathbf{i^+}}=\{w_{\mathbf{j}}|\,  \mathbf{j} \in \mathbf{I};\, \mathbf{j}>\mathbf{i}\}.
\]

The following lemmas are Corollary 6 and Proposition 7 in \cite{DMScov}.
\begin{lemma}\label{abelian}
 Let $G$ be a group and $\mathbf{i}\in \mathbf{I}$. If $\W_{\mathbf{i^+}}(G)=1$,  then $w_{\mathbf{i}}(G)$ is abelian.
 \end{lemma}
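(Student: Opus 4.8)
The plan is to prove the stronger statement that the subgroup $W_{\mathbf i}:=w(G^{(i_1)},\dots,G^{(i_k)})$, by which I mean the subgroup generated by all $w$-values $w(g_1,\dots,g_k)$ with $g_s\in G^{(i_s)}$ for each $s$, is abelian; since every $w_{\mathbf i}$-value lies in $W_{\mathbf i}$, this gives the lemma. Two elementary observations will be used repeatedly. First, if $g_s\in G^{(i_s)}$ then, because $G^{(i_s)}$ is normal and the variable $x_s$ occurs in the commutator $w$, the value $w(g_1,\dots,g_k)$ lies in $G^{(i_s)}$; as this holds for every $s$ we get $w(g_1,\dots,g_k)\in\bigcap_s G^{(i_s)}=G^{(M)}$, where $M=\max_s i_s$, and hence $W_{\mathbf i}\le G^{(M)}$. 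Second, I claim that $W_{\mathbf j}=w_{\mathbf j}(G)$ for every $\mathbf j\in\mathbf I$. Indeed $w_{\mathbf j}(G)\le W_{\mathbf j}$ because $\delta_{j_s}$-values lie in $G^{(j_s)}$, while the reverse inclusion follows from the multilinear expansion described below together with the fact that $\delta_{j_s}$-values are symmetric (Lemma \ref{inverse}), so that each generator of $G^{(j_s)}$ is a product of $\delta_{j_s}$-values. In particular $W_{\mathbf j}\le\W_{\mathbf{i^+}}(G)$ whenever $\mathbf j>\mathbf i$.

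The combinatorial engine is the following Leibniz-type inclusion for the multilinear commutator $w$: for any normal subgroups $N_0,N_1,\dots,N_k$ of $G$,
\[
[w(N_1,\dots,N_k),N_0]\ \le\ \prod_{t=1}^{k} w\bigl(N_1,\dots,N_{t-1},[N_t,N_0],N_{t+1},\dots,N_k\bigr).
\]
To obtain this I would fix $g_s\in N_s$ and $h\in N_0$ and expand $w(g_1,\dots,g_k)^h=w(g_1^h,\dots,g_k^h)=w\bigl(g_1[g_1,h],\dots,g_k[g_k,h]\bigr)$. Since each variable occurs exactly once in $w$, substituting a product into a slot and applying the identities $[ab,c]=[a,c]^b[b,c]$ and $[a,bc]=[a,c][a,b]^c$ repeatedly rewrites $w\bigl(g_1[g_1,h],\dots,g_k[g_k,h]\bigr)$ as a product of conjugates of the values $w(f_1,\dots,f_k)$ with $f_s\in\{g_s,[g_s,h]\}$; no deeper brackets are created, because the expansion only introduces conjugations. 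The factor with all $f_s=g_s$ is a conjugate of $w(g_1,\dots,g_k)$, so after cancelling it the commutator $[w(g_1,\dots,g_k),h]$ becomes a product of conjugates of values in which at least one slot $t$ carries $[g_t,h]\in[N_t,N_0]$; each such value lies in the corresponding factor on the right-hand side, which is normal in $G$. The same product expansion, applied now to a product of $\delta$-values in each slot, yields the inclusion $W_{\mathbf j}\le w_{\mathbf j}(G)$ needed above.

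With these tools the proof closes quickly. Because $W_{\mathbf i}\le G^{(M)}$ we have $[W_{\mathbf i},W_{\mathbf i}]\le[W_{\mathbf i},G^{(M)}]$, and applying the Leibniz inclusion with $N_0=G^{(M)}$ and $N_s=G^{(i_s)}$ gives
\[
[W_{\mathbf i},W_{\mathbf i}]\ \le\ \prod_{t=1}^{k} w\bigl(G^{(i_1)},\dots,[G^{(i_t)},G^{(M)}],\dots,G^{(i_k)}\bigr).
\]
For each $t$ we have $M\ge i_t$, hence $G^{(M)}\le G^{(i_t)}$ and $[G^{(i_t)},G^{(M)}]\le[G^{(i_t)},G^{(i_t)}]=G^{(i_t+1)}$; consequently the $t$-th factor is contained in $W_{\mathbf i+\mathbf e_t}$, where $\mathbf e_t$ is the tuple with $1$ in position $t$ and $0$ elsewhere. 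Since $\mathbf i+\mathbf e_t>\mathbf i$, we conclude from $W_{\mathbf i+\mathbf e_t}=w_{\mathbf i+\mathbf e_t}(G)\le\W_{\mathbf{i^+}}(G)=1$ that $[W_{\mathbf i},W_{\mathbf i}]=1$. Thus $W_{\mathbf i}$, and in particular $w_{\mathbf i}(G)$, is abelian.

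The main obstacle is the combinatorial engine of the second paragraph: one must check carefully that substituting products into the single occurrences of the variables of the multilinear commutator $w$ produces only conjugates of single-entry values, with no extra bracket depth, so that the conjugations can be absorbed into the normal factors on the right. Once this expansion and the resulting identity $W_{\mathbf j}=w_{\mathbf j}(G)$ are in place, the degree bookkeeping via $M=\max_s i_s$ is routine.
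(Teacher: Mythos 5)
First, a point of reference: the paper does not prove this lemma itself --- it is imported from \cite{DMScov} (Corollary 6 there) --- so your argument has to be judged on its own. Your overall strategy is sound and is essentially the standard route: pass to $W_{\mathbf i}=w(G^{(i_1)},\dots,G^{(i_k)})$, note that $W_{\mathbf i}\le G^{(M)}$ for $M=\max_s i_s$ and that $W_{\mathbf j}=w_{\mathbf j}(G)$ (both correct, the latter because the $\delta_{j_s}$-values form a normal symmetric generating set of $G^{(j_s)}$), and then drive $[W_{\mathbf i},W_{\mathbf i}]\le[W_{\mathbf i},G^{(M)}]$ into the factors $W_{\mathbf i+\mathbf e_t}\le\W_{\mathbf{i^+}}(G)=1$ via a Leibniz rule. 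The degree bookkeeping in your final paragraph is correct.

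The genuine soft spot is your derivation of the Leibniz inclusion by direct expansion. When you expand $w(g_1[g_1,h],\dots,g_k[g_k,h])$ using $[ab,c]=[a,c]^b[b,c]$ and its mirror, the factor in which every slot carries $g_s$ does not come out as $w(g_1,\dots,g_k)$ but as a conjugate $w(g_1,\dots,g_k)^u$, where $u$ is a product of the discarded entries $[g_s,h]$. ``Cancelling it'' therefore leaves, besides the terms lying in the normal factors $R_t=w(N_1,\dots,[N_t,N_0],\dots,N_k)$, an extra commutator $[w(g_1,\dots,g_k),u]$, and showing that this correction also lies in $\prod_t R_t$ is not automatic --- it is close to the statement being proved, applied to $u\in\prod_t[N_t,N_0]$. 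Already for $w=[x_1,x_2]$ the all-$g$ term is $[g_1,g_2]^{[g_2,h][g_1,h]}$, and absorbing the resulting correction requires a further application of the three subgroups lemma. The clean repair is to prove the Leibniz inclusion by induction on the structure of the outer commutator word: writing $w=[w_1,w_2]$ with $A=w_1(N_1,\dots,N_j)$ and $B=w_2(N_{j+1},\dots,N_k)$, one has $w(N_1,\dots,N_k)=[A,B]$, the three subgroups lemma gives $[[A,B],N_0]\le[[A,N_0],B]\,[A,[B,N_0]]$, and the inductive hypothesis applies to each factor. With that substitution your proof is complete; the rest of the argument needs no change.
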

  \begin{lemma}\label{pow_old}
 Let $G$ be a group  and $\mathbf{i}\in \mathbf{I}$. Suppose that 
 $a_j\in G^{(i_j)}$ for $j = 1,\dots, k$, and  let $b_s\in 
 G^{(i_s)}$. If $\W_{\mathbf{i^+}}(G)=1$, then 
$$w(a_1 ,\dots, a_{s-1}, b_s a_s , a_{s+1} ,\dots, a_k )$$
$$\quad=w(\bar a_1 ,\dots, \bar a_{s-1}, b_s ,\bar  a_{s+1} ,\dots,\bar  a_k )w(a_1 ,\dots, a_{s-1}, a_s , a_{s+1} ,\dots, a_k ),$$
where $\bar a_j$ is a conjugate of $a_j$ and moreover $\bar a_j=a_j$ if $i_j\le i_s$ .
 \end{lemma}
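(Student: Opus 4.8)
The plan is to prove the identity by induction on the number $k$ of variables of the multilinear commutator $w$, exploiting the fact that such a word behaves linearly in each variable up to correction terms that can be forced into a strictly higher derived subgroup and hence annihilated by the hypothesis. Write $\mathbf{i}=(i_1,\dots,i_k)$ and let $e_j$ denote the tuple with a $1$ in position $j$ and $0$ elsewhere, so that any $w_{\mathbf j}$ with $\mathbf j\ge \mathbf i+e_t$ for some $t$ belongs to $\W_{\mathbf{i^+}}$ and therefore evaluates to $1$ in $G$. The two elementary tools are the commutator identities
$$[\alpha\beta,\gamma]=[\alpha,\gamma]^{\beta}[\beta,\gamma],\qquad [\gamma,\alpha\beta]=[\gamma,\beta]\,[\gamma,\alpha]^{\beta},$$
together with the containments $[G^{(a)},G^{(b)}]\subseteq G^{(\min(a,b)+1)}$ and the normality of each $G^{(a)}$ in $G$.

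For the base case $k=1$ we have $w=x_1$ and the assertion reads $b_1a_1=b_1\cdot a_1$, which is trivial. For the inductive step write $w=[p,q]$, where $p$ and $q$ are multilinear commutators on complementary sets of variables, and assume that $x_s$ occurs in $p$ (the opposite case being symmetric via the second identity above). Put $P=p(\dots,b_sa_s,\dots)$ and $Q=q(\dots)$, so that $w(\dots,b_sa_s,\dots)=[P,Q]$. Applying the inductive hypothesis to $p$ yields a factorization $P=P_1P_2$, where $P_2=p(\dots,a_s,\dots)$ is the value carrying $a_s$ in the $s$-th slot and $P_1$ carries $b_s$ in the $s$-th slot with suitable conjugates elsewhere. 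Then $[P_1P_2,Q]=[P_1,Q]^{P_2}[P_2,Q]$, and the factor $[P_2,Q]$ is exactly $w(\dots,a_s,\dots)$, the second factor on the right-hand side of the claim.

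The heart of the argument is the treatment of $[P_1,Q]^{P_2}=[P_1^{P_2},Q^{P_2}]$. Since $P_2=p(\dots,a_s,\dots)$ is an iterated commutator in which $a_s\in G^{(i_s)}$ sits inside a bracket, normality gives $P_2\in G^{(i_s)}$. Pushing the conjugation by $P_2$ inside the words $p$ and $q$ conjugates every argument $a_j$ (and $b_s$) by $P_2$. Now for an entry in $G^{(i_j)}$ with $i_j\le i_s$ one has $[a_j,P_2]\in[G^{(i_j)},G^{(i_s)}]\subseteq G^{(i_j+1)}$, so by the commutator identities $w(\dots,a_j^{P_2},\dots)$ and $w(\dots,a_j,\dots)$ differ only by values of $w$ having their $j$-th entry in $G^{(i_j+1)}$; such values are $w_{\mathbf j}$'s with $\mathbf j\ge\mathbf i+e_j$, hence lie in the normal subgroup $\W_{\mathbf{i^+}}(G)=1$ and disappear. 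In particular $[b_s,P_2]\in G^{(i_s+1)}$, so the $s$-th entry reverts to exactly $b_s$, while for $i_j>i_s$ one only gets $[a_j,P_2]\in G^{(i_s+1)}$, which is not high enough to be discarded, and the conjugation is retained. Composing these conjugations with those already produced by the inductive step for $p$ gives, for each $j\ne s$, a single conjugate $\bar a_j$ of $a_j$ with $\bar a_j=a_j$ exactly when $i_j\le i_s$. Thus $[P_1,Q]^{P_2}=w(\bar a_1,\dots,b_s,\dots,\bar a_k)$ modulo $\W_{\mathbf{i^+}}(G)$, completing the induction.

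The main obstacle is the bookkeeping: one must check that every correction term, both those generated by expanding the sub-word $p$ and those coming from conjugation by $P_2$, is genuinely a value of $w$ with one argument forced into the next derived subgroup $G^{(i_t+1)}$, so that it is killed by $\W_{\mathbf{i^+}}(G)=1$. The cleanest way to manage this is to prove, in place of the stated two-factor identity, an unconditional version carrying explicit error terms of this shape, invoking $\W_{\mathbf{i^+}}(G)=1$ only at the very end; this sidesteps the awkward point that the sub-words $p$ and $q$ do not inherit a triviality hypothesis of their own. The other delicate point is the verification that $[G^{(i_j)},G^{(i_s)}]\subseteq G^{(i_j+1)}$ holds precisely in the regime $i_j\le i_s$, since it is exactly this dichotomy that produces the conclusion $\bar a_j=a_j$ for $i_j\le i_s$ and leaves a genuine conjugate otherwise.
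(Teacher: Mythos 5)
The paper itself does not prove this lemma: it is quoted verbatim from \cite{DMScov} (Proposition 7 there), so your attempt can only be judged on its own merits. Your overall strategy --- structural induction on $w=[p,q]$, the expansion $[P_1P_2,Q]=[P_1,Q]^{P_2}[P_2,Q]$, and the elimination of correction terms whose $j$-th entry has been pushed into $G^{(i_j+1)}$ --- is indeed the standard route for results of this kind.

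There is, however, a genuine gap at the step you rely on most: the assertion that a value $w(c_1,\dots,c_k)$ with $c_l\in G^{(j_l)}$ for some tuple $\mathbf{j}>\mathbf{i}$ ``is a $w_{\mathbf{j}}$-value, hence lies in $\W_{\mathbf{i^+}}(G)=1$''. By definition $\W_{\mathbf{i^+}}(G)$ is generated by values of the words $w(\delta_{j_1},\dots,\delta_{j_k})$, that is, by $w$-values whose entries are \emph{single} $\delta_{j_l}$-values; an arbitrary element of $G^{(j_l)}$ is only a product of such values, so the containment $w\bigl(G^{(j_1)},\dots,G^{(j_k)}\bigr)\le \W_{\mathbf{i^+}}(G)$ is not immediate --- establishing it requires precisely the kind of multilinear expansion you are in the middle of proving. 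The same circularity reappears when you ``revert'' $b_s^{P_2}$ to $b_s$ and $a_j^{P_2}$ to $a_j$ for $i_j\le i_s$: each reversion is an application of the lemma itself in slot $s$ or $j$, which is not available inside an induction on $k$ alone, and moreover each reversion re-conjugates the entries $j'$ with $i_{j'}>i_j$, so one must also show the cascade terminates with the required entries untouched. You correctly sense both difficulties (your closing remarks about an ``unconditional version with explicit error terms'' and about the bookkeeping), but the proof as written neither formulates that stronger unconditional identity nor supplies the auxiliary statement on $w\bigl(G^{(j_1)},\dots,G^{(j_k)}\bigr)$; without these two ingredients, proved by a single simultaneous induction, the argument does not close.
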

  \begin{corollary}\label{pow} 
 Let $G$ be a group and $\mathbf{i}\in \mathbf{I}$.
 If $\W_{\mathbf{i^+}}(G)=1$, then every power of a $w_{\mathbf{i}}$-value is  a $w$-value.
 \end{corollary}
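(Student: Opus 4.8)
The plan is to reduce powers of a $w_{\mathbf{i}}$-value to a single evaluation of $w$ by exploiting the additivity in one fixed coordinate furnished by Lemma \ref{pow_old}. Fix a $w_{\mathbf{i}}$-value $g$ and write it as $g=w(a_1,\dots,a_k)$, where each $a_j$ is a $\delta_{i_j}$-value, so in particular $a_j\in G^{(i_j)}$. The key preliminary observation is to choose an index $s$ for which $i_s$ is \emph{maximal} among $i_1,\dots,i_k$. With this choice $i_j\le i_s$ holds for every $j$, and consequently each time Lemma \ref{pow_old} is invoked in the $s$-th slot the correcting conjugates $\bar a_j$ collapse to $a_j$; this is exactly what makes the argument go through cleanly.

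With $s$ fixed as above, I would prove by induction on $m\ge 1$ the identity
$$g^m=w(a_1,\dots,a_{s-1},a_s^m,a_{s+1},\dots,a_k).$$
The base case $m=1$ is immediate. For the inductive step, apply Lemma \ref{pow_old} to the tuple $(a_1,\dots,a_k)$, keeping $a_s$ in the $s$-th position and taking $b_s=a_s^{m-1}\in G^{(i_s)}$. Since $i_j\le i_s$ for all $j$, every conjugate $\bar a_j$ equals $a_j$, and the lemma yields
$$w(a_1,\dots,a_s^{m},\dots,a_k)=w(a_1,\dots,a_s^{m-1},\dots,a_k)\cdot w(a_1,\dots,a_s,\dots,a_k).$$
By the induction hypothesis the first factor on the right is $g^{m-1}$ and the second is $g$, so the product is $g^m$, which completes the induction.

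Once this identity is available, the conclusion is immediate: the element $w(a_1,\dots,a_s^m,\dots,a_k)$ is by definition a $w$-value (it is $w$ evaluated at $k$ elements of $G$), and it equals $g^m$; this settles all positive powers. For $m=0$ one notes that $1=w(1,\dots,1)$ is a $w$-value, and for negative exponents I would invoke Lemma \ref{inverse}, which guarantees that the set of $w$-values is symmetric, so that $g^{-m}=(g^m)^{-1}$ is again a $w$-value.

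The only genuinely delicate point is the choice of the slot $s$. The additivity formula of Lemma \ref{pow_old} introduces uncontrolled conjugates in the positions $j$ with $i_j>i_s$, and were these to survive they would obstruct the telescoping from yielding precisely $g^m$. Selecting $s$ with $i_s$ maximal eliminates every such correction simultaneously, after which the proof is a routine induction. Note that the hypothesis $\W_{\mathbf{i^+}}(G)=1$ enters only through Lemma \ref{pow_old}, on whose validity it is built.
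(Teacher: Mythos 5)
Your proof is correct and takes essentially the same approach as the paper: choose the slot $s$ with $i_s$ maximal so that all the correcting conjugates in Lemma \ref{pow_old} collapse, then telescope by induction. The only cosmetic difference is that the paper runs the induction over an arbitrary integer $l$ directly, whereas you handle negative exponents separately via Lemma \ref{inverse}; both are fine.
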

\begin{proof}
 Let $w(a_1 ,\dots, a_k )$ be a $w_{\mathbf{i}}$-value, where each $a_j$ is a $\delta_{i_j}$-value. In particular $a_j\in G^{(i_j)}$. Let $i_s$ be the maximum among all $i_j$ with $j = 1,\dots, k$ and let $l$ be any integer. Then Lemma \ref{pow_old} and  induction on $l$ show that:
 $$w(a_1 ,\dots, a_{s-1},  a_s, a_{s+1} ,\dots, a_k )^l=w(a_1 ,\dots, a_{s-1},  a_s^l , a_{s+1} ,\dots, a_k ).$$
 This proves the result.
\end{proof}

We say that a word $u$ is an {\em extension} of $w$ if $u=w(u_1,\dots,u_k)$, for some words $u_1,\dots,u_k$. The series constructed in the following proposition
is similar to those appearing in \cite{fernandez-morigi,DMScov,fealcobershu}

\begin{proposition}\label{w-series}
Let $F$ be the free group on countably many free generators. Then for each $s\ge k$ there exist 
a $(k,s)$-bounded integer $r$ and a series 
of  length $r$
 $$F^{(s)}=\U_{0}(F)\leq \U_1(F)\le \dots\leq \U_r(F)=w(F)$$ 
such that:
\begin{enumerate}
 \item  Each $\U_j$ is a finite set of multilinear commutators which are extensions of $w$.
  \item  Every section $\U_{j+1}(F)/\U_j(F)$ is  abelian.
  \item For every 
 $j=1,\ldots,r$ and $u\in \U_j$ the inclusion
 $(F_u)^i\subseteq F_w \U_{j-1}(F)$ holds for all integers $i$, that is, $i$-th powers of $u$-values are $w$-values modulo $\U_{j-1}(F)$.
 \end{enumerate}
\end{proposition}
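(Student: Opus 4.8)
The plan is to build the series directly out of the words $w_{\mathbf{i}}=w(\delta_{i_1},\ldots,\delta_{i_k})$, using that $w_{\mathbf{0}}=w$ and that, for $s\ge k$, the derived word $\delta_s$ is itself one of the $w_{\mathbf{i}}$. Write $d_l$ for the depth of the $l$-th variable in the bracketing tree of $w$; since $s\ge k>d_l$, replacing each variable $x_l$ of $w$ by a copy of $\delta_{s-d_l}$ pushes every leaf to depth exactly $s$, so that $w$ becomes the balanced bracketing of height $s$. Hence $\delta_s=w_{\mathbf{i}^*}$ with $\mathbf{i}^*=(s-d_1,\ldots,s-d_k)$; in particular $\delta_s$ is an extension of $w$ and $w_{\mathbf{i}^*}(F)=F^{(s)}$. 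This lets me take $\U_0=\{\delta_s\}$, so that $\U_0(F)=F^{(s)}$.

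The combinatorial heart of the argument is the index set
$$Q=\{\mathbf{j}\in\mathbf{I}\,:\,w_{\mathbf{j}}(F)\not\leq F^{(s)}\}.$$
I would first record that $Q$ is downward closed: if $\mathbf{j}\ge\mathbf{j}'$ then every $\delta_{j_l}$-value is a $\delta_{j'_l}$-value, whence $w_{\mathbf{j}}(F)\le w_{\mathbf{j}'}(F)$, so $\mathbf{j}\in Q$ forces $\mathbf{j}'\in Q$. The decisive point is \emph{finiteness} of $Q$: if some coordinate satisfies $j_l\ge s$, then every argument filling slot $l$ of $w_{\mathbf{j}}$ lies in $F^{(j_l)}\le F^{(s)}$, and a multilinear commutator with one entry in a normal subgroup $N$ lands in $N$ (its image modulo $N$ is a multilinear commutator with a trivial entry, hence $1$); thus $w_{\mathbf{j}}(F)\le F^{(s)}$ and $\mathbf{j}\notin Q$. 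Therefore $Q\subseteq\{0,\ldots,s-1\}^k$, so $|Q|\le s^k$ is $(k,s)$-bounded.

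Next I would enumerate $Q=\{\mathbf{q}_1,\ldots,\mathbf{q}_r\}$ by a linear extension of the partial order in which strictly larger tuples come first (so $\mathbf{q}_a>\mathbf{q}_b$ forces $a<b$) and set $\U_j=\{\delta_s,w_{\mathbf{q}_1},\ldots,w_{\mathbf{q}_j}\}$. The chain $\U_j(F)$ then increases from $\U_0(F)=F^{(s)}$ to $\U_r(F)=w(F)$, the last equality holding because $\mathbf{0}\in Q$ gives $w_{\mathbf{0}}(F)=w(F)$ while all remaining generators lie inside $w(F)$ (the degenerate case $Q=\varnothing$, i.e. $w(F)=F^{(s)}$, is trivial). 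Property (1) is then immediate, since each $w_{\mathbf{q}}$ and $\delta_s=w_{\mathbf{i}^*}$ is a multilinear commutator that is an extension of $w$, and there are finitely many of them.

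For (2) and (3) the engine is the inclusion $\W_{\mathbf{q}_j^+}(F)\le\U_{j-1}(F)$. Letting $e_l$ denote the tuple with $1$ in position $l$ and $0$ elsewhere, the minimal tuples above $\mathbf{q}_j$ are the $\mathbf{q}_j+e_l$, so $\W_{\mathbf{q}_j^+}(F)=\langle w_{\mathbf{q}_j+e_l}(F): 1\le l\le k\rangle$. For each $l$ I use a dichotomy: either $\mathbf{q}_j+e_l\in Q$, in which case it is strictly larger than $\mathbf{q}_j$, hence already processed, so $w_{\mathbf{q}_j+e_l}(F)\le\U_{j-1}(F)$; or $\mathbf{q}_j+e_l\notin Q$, in which case $w_{\mathbf{q}_j+e_l}(F)\le F^{(s)}=\U_0(F)\le\U_{j-1}(F)$ by the very definition of $Q$. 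Thus $\W_{\mathbf{q}_j^+}(G)=1$ in $G=F/\U_{j-1}(F)$, and Lemma \ref{abelian} shows that $\U_j(F)/\U_{j-1}(F)$, the image of $w_{\mathbf{q}_j}(F)$, is abelian, while Corollary \ref{pow} shows that every power of a $w_{\mathbf{q}_j}$-value is a $w$-value modulo $\U_{j-1}(F)$; the corresponding statement for $\delta_s$ is trivial, as its values already lie in $F^{(s)}$, and for the earlier words $w_{\mathbf{q}_a}$ ($a<j$) it follows from the stage-$a$ conclusion together with $\U_{a-1}(F)\le\U_{j-1}(F)$. The step I expect to be the crux is exactly this last dichotomy: ensuring that each one-step increment $\mathbf{q}_j+e_l$ is \emph{either} already available in the chain \emph{or} forced into $F^{(s)}$, which is what makes the finite set $Q$ simultaneously closed enough to feed Lemma \ref{abelian}/Corollary \ref{pow} and small enough to keep $r$ bounded.
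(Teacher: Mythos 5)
Your proof is correct and follows essentially the same route as the paper: take $\U_0=\{\delta_s\}$, adjoin the words $w_{\mathbf{i}}$ one at a time in an order compatible with the partial order on $\mathbf{I}$ so that $\W_{\mathbf{i}^+}(F)\le\U_{j-1}(F)$ at each stage, and then invoke Lemma \ref{abelian} and Corollary \ref{pow}. Your explicit description of the relevant index set as $Q\subseteq\{0,\dots,s-1\}^k$ and the use of a linear extension merely make precise the greedy selection and the $(k,s)$-bound that the paper's proof asserts more briefly.
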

\begin{proof}
 We set $\U_{0}=\{\delta_{s}\}$. Note that there are only $(k,s)$-boundedly
 many
 $k$-tuples ${\bf i}$ such that $w_{\bf i}(F)\not\le \U_{0}(F)$. 
 Choose ${\bf i}$ such that $w_{\bf i}(F)\not\le \U_{0}(F)$ but
 $\W_{\mathbf{i^+}}(F)\le \U_{0}(F)$ and set  $\U_{1}=\{w_{\bf i}\}\cup \U_0$. More generally, once we have defined $\U_{j-1}$,
 we choose 
 ${\bf i}$ such that $w_{\bf i}(F)\not\le \U_{j-1}(F)$ but
 $\W_{\mathbf{i^+}}(F)\le \U_{j-1}(F)$ and set  $\U_{j}=\{w_{\bf i}\}\cup \U_{j-1}$. After a $(k,s)$-bounded number of steps $r$ we get 
  ${\bf i}=(0,\dots,0)$. So $\U_r=\{w\}\cup\U_{r-1}$.  As all the words in $\U_{r-1}$ are extensions of $w$ we obtain that $ \U_r(F)=w(F)$.
 
 As  $\W_{\mathbf{i^+}}(F)\le \U_{j-1}(F)$, it follows that 
$$\U_j(F)/\U_{j-1}(F)=w_{\bf i}(F)\U_{j-1}(F)/\U_{j-1}(F)$$ 
 is a homomorphic image of $w_{\bf i}(F)/\W_{\mathbf{i^+}}(F)$.
 So the quotient group $\U_j(F)/\U_{j-1}(F)$   is abelian by Lemma \ref{abelian}. 
Moreover, by Corollary \ref{pow},
 powers of $w_{\bf i}$-values are again  $w$-values  modulo $\W_{\mathbf{i^+}}(F)$ so the same holds  modulo $\U_{j-1}(F)$.
 \end{proof}

\begin{proposition}\label{ri}
 Let $\U_0,\dots, \U_r$ be sets of words as in Proposition \ref{w-series}. Then for every positive integer $i$  there exists an integer $r_i$
 such that for every group $G$, for every $j=1,\dots.r$, for every $\U_j$-value $g$ in $G$ and every $t\in G$ we have
 \begin{equation*}
[g,{}_n t]^i= \h \tau,
\end{equation*}
where $\h$ is a $v$-value and $\tau$ is the product of $r_i$ $\U_{j-1}$-values.
 \end{proposition}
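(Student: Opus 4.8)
The plan is to establish the identity in the free group $F$ and then obtain the general case by substitution. First I would fix an index $j$ with $1\le j\le r$ and a word $u\in\U_j$, take $F$ free of large enough rank, let $g$ be the value of $u$ on some of the free generators, and let $t$ be a further free generator. It then suffices to produce in $F$ an identity $[g,{}_n t]^i=\h\tau$ in which $\h$ is a $v$-value and $\tau$ is a product of boundedly many $\U_{j-1}$-values: an arbitrary homomorphism $F\to G$ carries $g$ to an arbitrary $u$-value, $t$ to an arbitrary element of $G$, $\h$ to a $v$-value, and $\tau$ to a product of the same number of $\U_{j-1}$-values. Since each $\U_j$ is a finite set of words (Proposition \ref{w-series}(1)) and there are only finitely many indices $j\le r$, letting $u$ range over all of them and taking the maximum of the resulting bounds produces a single $r_i$ depending only on $i$ and the fixed data $\U_0,\dots,\U_r$.

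The core of the argument is a computation in the section $A=\U_j(F)/\U_{j-1}(F)$, which is abelian by Proposition \ref{w-series}(2); I write $A$ additively. As $\U_j(F)$ and $\U_{j-1}(F)$ are verbal, hence normal, conjugation by $t$ induces an automorphism $\al$ of $A$. Because $g\in\U_j(F)$ we have $[g,t]=g^{-1}g^t\in\U_j(F)$, and hence $[g,{}_n t]\in\U_j(F)$; a straightforward induction using $\overline{[x,t]}=(\al-1)\bar x$ gives $\overline{[g,{}_n t]}=(\al-1)^n\bar g$. Thus the composite $\psi\colon\U_j(F)\to A$, $\psi(x)=(\al-1)^n\bar x$, is a homomorphism satisfying $\psi(x)=\overline{[x,{}_n t]}$, so that
\[
\overline{[g,{}_n t]^i}=i\,\psi(g)=\psi(g^i).
\]

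Finally I would invoke Proposition \ref{w-series}(3) to write $g^i=fh$ with $f$ a $w$-value and $h\in\U_{j-1}(F)$. The key point is that $f=g^ih^{-1}$ lies in $\U_j(F)$, since both $g^i$ and $h$ do; therefore $\psi(f)$ is defined, and as $\psi$ factors through $A$ and $\bar f=\overline{g^i}$ we obtain $\psi(g^i)=\psi(f)=\overline{[f,{}_n t]}$. Putting $\h=[f,{}_n t]$, which is a $v$-value because $f$ is a $w$-value, we conclude that $[g,{}_n t]^i\equiv\h\pmod{\U_{j-1}(F)}$, so $\tau:=\h^{-1}[g,{}_n t]^i$ lies in $\U_{j-1}(F)$; since each word in $\U_{j-1}$ is a multilinear commutator, the set of $\U_{j-1}$-values is symmetric (Lemma \ref{inverse}), whence $\U_{j-1}(F)$ consists of products of $\U_{j-1}$-values and $\tau$ is such a product. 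The hard part is the uniformity of $r_i$: in the fixed free group $F$, with $u$ and $i$ fixed, $\tau$ is a single specific word and hence a product of some definite number of $\U_{j-1}$-values, and it is exactly the finiteness of $\U_j$ together with $j\le r$ that converts these finitely many definite numbers into one bound $r_i$ independent of $G$, $g$, and $t$.
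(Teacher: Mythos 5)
Your argument is correct and follows essentially the same route as the paper's: both work in the free group, use the abelianness of $\U_j(F)/\U_{j-1}(F)$ to see that $[g,{}_n t]^i\equiv[g^i,{}_n t]$ modulo $\U_{j-1}(F)$, replace $g^i$ by a $w$-value via Proposition \ref{w-series}(3), and obtain the uniform bound $r_i$ from the finiteness of the sets $\U_j$ before transferring to an arbitrary $G$ by a substitution homomorphism. Your explicit introduction of the endomorphism $(\al-1)^n$ of the abelian section is just a more formal phrasing of the congruence manipulation the paper performs directly.
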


\begin{proof} 
  Let $F$  be the free group of countable rank with free generators $\{y,x_\eta| \eta=1,2,\dots\}$
 and let
 $$F^{(s)}=\U_{0}(F)\leq \U_1(F)\le \dots\leq \U_r(F)=w(F),$$ 
 be a series as   in Proposition \ref{w-series}.
Consider an arbitrary $j$ and let $u\in \U_j$. 
Recall that  $u$-values are $w$-values
 and their powers are  $w$-values modulo $\U_{j-1}(F)$.
 So for every integer $i$ we have 
 $$u^i\U_{j-1}(F)=\tilde u_i \U_{j-1}(F),$$
 for some $w$-value $\tilde u_i$ in $F$ (which depends on $i$). As $\U_j(F)/\U_{j-1}(F)$ is abelian, it follows that
 $$[u,{}_n y]^i\U_{j-1}(F)=[u^i,{}_n y]\U_{j-1}(F)=[\tilde u_i,{}_n y]\U_{j-1}(F).$$
 So  
\begin{equation*}
[u,{}_n y]^i=[\tilde u_i,{}_n y]\tau_i,
\end{equation*}
where $[\tilde u_i,{}_n y]$ is a $v$-value in $F$ and $\tau_i$ is the product of  a finite number 
 $r(i, u)$ of $\U_{j-1}$-values. 
Let $r_i$ 
be the maximum of all $r({i,u})$'s,  for all words $u\in \U_j$ and 
 all  $j=0,\dots,r$. Note that $r_i$ depends only on  $w$ and $i$.

Now let $G$ be any group and let $g$ be a $u$-value for some $u\in \U_j$, say $u=u(x_1,\dots,x_l)$, so that $g=u(g_1,\dots,g_l)$ where each $g_\eta$ 
belongs to $G$. Consider the epimorphism $\varphi: F\to \langle g_1,\dots,g_l, t\rangle$ defined by  $\varphi(y)=t$,
$\varphi(x_\eta)=g_\eta$ if $\eta\in\{1,\dots,l\}$,
$\varphi(x_\eta)=1$ otherwise. Then $[g,{}_n t]^i=\varphi([u,{}_n y]^i)$ has the desired properties, where $\h=\varphi([\tilde u_i,{}_n y])$ and
$\tau=\varphi(\tau_i)$.
\end{proof}

\section{Proof of Theorem \ref{aa}}

In this section we will prove the first of our main results. 

\begin{proof}[Proof of Theorem \ref{aa}]
Let $v=[w,{}_n\,y]$, where  $w$ is a   multilinear commutator word
 in $k$ variables 
 and  $n \ge 1$.   We need to show that if $G$ is a residually finite group with at most $m$ values of the word $v$, then the order of $v(G)$ is $(v,m)$-bounded. Evidently, it suffices to establish this result for finite quotients of $G$. Therefore, without loss of generality, we assume that $G$ is finite. 

In view of Lemma \ref{zero} we may pass to the quotient $G/v(G)'$ and assume that $v(G)$ is abelian. The image of $w(G)$ in $G/v(G)$ is generated by right Engel elements. Hence $w(G)/v(G)$ is nilpotent. 
Since $w$ is a multilinear commutator on $k$ variables, Lemma \ref{lem:delta_k} tells us that every $\delta_k$-value is a $w$-value. It follows that the  group $G$ is soluble. Taking into account that $C_G(v(G))$ has $m$-bounded   
 index in $G$, we deduce that
$G/C_G(v(G))$  has $m$-bounded derived length. 
Let $l$ be the smallest integer greater than $k$ such that 
 $G^{(l)}$ centralizes $v(G)$. Note that $l$ is $(m,k)$-bounded and
every $\delta_l$-value is also a $w$-value. 

Let $x,y\in G$ where $y$ is a $\delta_l$-value. Using the formula $[x,y,y]=[y^{-xy},y]$ and taking into account that $y^{-1}$ is 
a $w$-value, 
 we deduce that the commutator $[x,{}_{n+1}\,y]$ is a $v$-value and therefore, since 
$[v(G),y]=1$, we have $[x,{}_{n+2}\,y]=1$. 
 So  each $\delta_l$-value is $(n+2)$-Engel in $G$. 

Note that any $\delta_{2l}$-value can naturally be written as a $\delta_l$-value whose entries are also $\delta_l$-values. Let  $g=\delta_l(g_{1},\dots,g_{2^{l}})$ be a $\delta_{2l}$-value, 
where each $g_{i}$ is a $\delta_l$-value. Further, choose an arbitrary element $t\in G$ and let $H$ be the minimal $t$-invariant subgroup 
of $G$ containing all these elements $g_{i}$. Since the image of $t$ in $G/v(G)$ acts on each $g_{i}$ as an Engel element, 
Lemma \ref{normal closure} tells us that the image of $H$ in $G/v(G)$ is generated by a $(v,m)$-bounded number of
 $\delta_l$-values,
which are $(n+2)$-Engel. By  Proposition \ref{var22}, 
$ H{v(G)}/{v(G)}$  is nilpotent of $(v,m)$-bounded class.
Since ${v(G)}$ is abelian, we conclude that $ H\langle  t\rangle$ is soluble  of $(v,m)$-bounded derived length. 
Lemma \ref{soluble} tells us that $v(H\langle t\rangle)$ is finite with order bounded by an integer $R=R(v,m)$ depending only on $v$ and $m$.   
Since $v(G)$ is abelian of rank at most $m$, the order of the subgroup $M_{0}$ of $v(G)$ generated by all elements of order at most $R$ is $(v,m)$-bounded.  In particular, as $[g,{}_nt]\in v(H\langle t\rangle)$,  it follows that $[g,{}_nt]\in M_0$. Thus we have found a normal subgroup $M_{0}$ of $(v,m)$-bounded order such that for every $\delta_{2l}$-value $g$, and every  $t\in G$ we have that  $[g,{}_nt]=1$  in the quotient group $G/M_0$.

Consider now the sets of words 
 $\U_0,\dots, \U_r$  defined  in Proposition \ref{w-series}   and  the integers $r_i$ 
 defined  in Proposition \ref{ri} with $s=2l$.
 Then for every integer $i$, for every $j=1,\dots ,r$, for every $\U_j$-value $g\in G$ and every $t\in G$ we have
 \begin{equation}\label{tauGG}
[g,{}_n t]^i= \h \tau,
\end{equation}
where $\h$ is a $v$-value and $\tau$ is the product of $r_i$ $\U_{j-1}$-values.

We will prove by induction on $j$ that 
\begin{itemize}
  \item[(*)]
$G$ has a normal subgroup $M_j\leq v(G)$ of $(v,m,j)$-bounded order such that $[g,{}_{n+1}t]\in M_j$ for every $\U_j$-value $g$ and every $t\in G$.
 \end{itemize}

In the case   $j=0$ we have 
  $\U_0=\{\delta_{2l}\}$, and the result has already been proved. 

Assume that $j>0$. Without loss of generality we can assume that $M_{j-1}$ is trivial and so every 
$\U_{j-1}$-value is right $(n+1)$-Engel in $G$. 
 
Let $g$ be an arbitrary $\U_j$-value in $G$ and let $t\in G$. 
  As 
in $G$ there are at most $m$ $v$-values, equality (\ref{tauGG}) implies that  for some $0 \le i_1 < i_2\leq m $ 
we have 
$$[g,{}_nt]^{i_1}=\h \tau_{1},$$ 
$$[g,{}_nt]^{i_2}=\h \tau_{2},$$ 
where $\tau_1$ and $\tau_2$ are products of at most $r_1$ and $r_2$  $\U_{j-1}$-values, respectively. 
So  $[g,{}_nt]^{i_2-i_1}$ is the product of 
  $f$ (possibly trivial) $\U_{j-1}$-values
$z_1,\dots,z_f$ 
where 
$$f=2 \cdot \max \{ r_i \mid 0 \le i \le m \} .$$
 
By induction, $[z_i,{}_{n+1}t]=1$ for each $i$. Lemma \ref{H} guarantees that the subgroup $Z=\langle  z_1,\ldots,z_f, t \rangle$ is nilpotent of $(v,m)$-bounded class. 
 So there exists a $(v,m)$-bounded  integer $s$ such that $[Z,{}_st]=1$. 
Since the index of $C_G(v(G))$ in $G$ is finite, conjugation by $t$ is an automorphism of $v(G)\cap Z$ of bounded order, say $e$. 
In view of  Lemma \ref{casolo} we obtain that $[(v(G)\cap Z)^{e^{s-1}},t]=1$. As $v(G)\cap Z$ is abelian, it follows that 
$[v(G)\cap Z,t]$ has finite exponent at most $e^{s-1}$, which is  $(v,m)$-bounded.
Let $N$ be the subgroup of $v(G)$ generated by all elements of order at most $e^{s-1}$. Note that $N$ has $(v,m)$-bounded order. 

If $N=1$, then $[a,t]=1$ for every $a\in v(G)\cap Z$ and $t\in G$, that is, $v(G)\cap Z$ is contained in the center
of the group $G$. 
Moreover, as $[g,{}_nt]^{{i_2-i_1}}\in Z\cap v(G)$ and 
 $v(G)$ is  abelian, 
 we deduce that 
$$1=[[g,{}_nt]^{ {i_2-i_1}},t]= [[g,{}_nt],t]^{{i_2-i_1}  }=[g,{}_{n+1}t]^{ {i_2-i_1} }.$$

 Let $M_j$ be the subgroup generated by all elements of $v(G)$ whose ${m!}$-th power belongs to $N$.  The above equality shows that $[g,{}_{n+1}t]\in M_j$ for an arbitrary  $\U_j$-value $g$ and an arbitrary $t\in G$.
 Obviously, the order of $M_j$ is bounded. 
This concludes the proof of (*).

Therefore there exists a normal subgroup $M_r$ of $G$ of $(v,m)$-bounded order such that in the quotient group $G/M_r$ the equality $[g,{}_{n+1}t]=1$ holds for every $w$-value $g$ and for every element $t$.

Passing to the quotient $G/M_r$, we can assume that $M_r=1$. By Lemma \ref{H} the subgroup $\langle g,t\rangle$ is nilpotent of $(v,m)$-bounded class  for every $g\in G_w$ and every $t\in G$. In view of Corollary \ref{short} we obtain that for every $g \in G_w$ and every $t \in G$ the element $[g,{}_{n}t]$ has $(v,m)$-bounded order. 

Thus, $v(G)$ is an abelian group of rank at most $m$ generated by elements of $(v,m)$-bounded order. Hence, the order of $v(G)$ is $(v,m)$-bounded. The proof is complete.
\end{proof}

\section{Theorem \ref{bb}}

In the present section Theorem \ref{bb} will be proved. We need the following proposition, which is the main result in \cite{P05}.
\begin{proposition}\label{pp-order}
Let $G$ be a residually finite group satisfying some identity. 
Suppose $G$ is generated by a normal commutator-closed set of $p$-elements. Then $G$ is locally finite.
\end{proposition}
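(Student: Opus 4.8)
The plan is to show that every finitely generated subgroup of $G$ is finite. Since $G$ is generated by the set $X$, any finitely generated subgroup is contained in $H=\langle x_1,\dots,x_d\rangle$ for suitable $x_1,\dots,x_d\in X$, so it suffices to prove that such an $H$ is finite. Being a subgroup of a residually finite group, $H$ is residually finite, and of course $H$ still satisfies the ambient identity; these are the only two global features of $G$ that I intend to use, together with the facts that each $x_i$ is a $p$-element and that commutators and conjugates of elements of $X$ again lie in $X$.

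First I would record that every finite quotient $\bar H$ of $H$ is a $p$-group. The image $\bar X$ of $X$ in $\bar H$ is again a normal, commutator-closed set of $p$-elements, and $\bar H=\langle\bar x_1,\dots,\bar x_d\rangle\le\langle\bar X\rangle$. For $x\in\bar X$ and any $g\in\bar H$ the subgroup $\langle x,x^{g}\rangle$ is generated by two $p$-elements all of whose iterated commutators again lie in $\bar X$ and are therefore $p$-elements; an easy induction on the derived length shows that such a subgroup is a $p$-group. By the Baer--Suzuki theorem this forces $x\in O_p(\bar H)$ for every $x\in\bar X$, whence $\langle\bar X\rangle$, and in particular $\bar H$, is a $p$-group. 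Since $H$ is residually finite and all its finite quotients are $p$-groups, $H$ is residually-$p$. Note that this step does not use the identity.

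Now I would invoke the Lie-theoretic machinery. Associate with $H$ the Lie algebra $L$ over the field of $p$ elements arising from the Zassenhaus filtration; it is generated by the images $a_1,\dots,a_d$ of $x_1,\dots,x_d$, and because $H$ is residually-$p$ the filtration has trivial intersection. Since $H$ satisfies a group identity, a theorem of Wilson and Zelmanov guarantees that $L$ satisfies a polynomial identity. Each $x_i$ has finite $p$-power order, so $a_i$ is ad-nilpotent; more importantly, every commutator in $x_1,\dots,x_d$ lies in $X$ and is thus a $p$-element, so every commutator in $a_1,\dots,a_d$ is ad-nilpotent. By Zelmanov's theorem \cite{ze4}, a finitely generated Lie algebra satisfying a polynomial identity in which every commutator in the generators is ad-nilpotent is nilpotent; hence $L$ is nilpotent. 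As $L$ is generated by $d$ elements lying in degree one, each homogeneous component is finite-dimensional, so nilpotency makes $L$ finite-dimensional. Finite-dimensionality of $L$ means the Zassenhaus series of $H$ stabilises, and since it has trivial intersection it must reach $1$; therefore $H$ is a finite $p$-group of order $p^{\dim L}$. This proves that $G$ is locally finite.

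The routine reductions aside, the heart of the argument --- and the step I expect to be the main obstacle --- is the passage to the associated Lie algebra: one must verify that the group identity descends to a genuine polynomial identity on $L$ (the Wilson--Zelmanov input) and, crucially, that the commutator-closed, $p$-element hypothesis on $X$ translates into ad-nilpotency of all commutators in the generators, so that Zelmanov's nilpotency theorem becomes applicable. Everything else, including the deduction of finiteness of $H$ from finite-dimensionality of $L$, is standard once this Lie-theoretic input is in place.
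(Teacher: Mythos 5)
The paper itself offers no proof of this proposition: it is quoted as the main result of \cite{P05}, so the comparison must be with the argument in that reference. Your overall strategy --- reduce to a finitely generated subgroup $H=\langle x_1,\dots,x_d\rangle$ with $x_i\in X$, show $H$ is residually-$p$, pass to the Lie algebra of the Zassenhaus filtration, and combine the Wilson--Zelmanov theorem with Zelmanov's nilpotency theorem, using the $p$-power orders and commutator-closedness of $X$ to get ad-nilpotency of all commutators in the generators --- is exactly the standard route, and it is essentially the one taken in \cite{P05}. The Lie-theoretic half is in order except for the very last step: ``nilpotency makes $L$ finite-dimensional, hence the Zassenhaus series stabilises'' tacitly identifies the Lie subalgebra generated by the degree-one component with the whole associated Lie algebra, whereas the latter is only the \emph{restricted} subalgebra generated by $H/D_2$; the usual way to finish is via powerful $p$-groups in the finite $p$-quotients, or Lazard's analyticity criterion, rather than a direct dimension count. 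That is a repairable imprecision.

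The genuine gap is in the step you dismiss as routine: that every finite quotient $\bar H$ is a $p$-group. You argue that $\langle x,x^{g}\rangle$ is generated by two $p$-elements all of whose iterated commutators are $p$-elements, ``so an easy induction on the derived length shows that it is a $p$-group'', and then invoke Baer--Suzuki. But an induction on the derived length presupposes that $\langle x,x^{g}\rangle$ is soluble, and nothing you have said excludes the possibility that this two-generator subgroup is perfect --- a nonabelian simple group can perfectly well be generated by two conjugate $p$-elements. Switching to induction on the order does not save you: a minimal counterexample to ``a finite group generated by a normal commutator-closed set of $p$-elements is a $p$-group'' is easily seen (via the Baer--Suzuki reduction and the fact that its derived subgroup is again generated by such a set) to be a nonabelian \emph{simple} group generated by two conjugate elements of $X$, and ruling out that configuration is precisely the nontrivial content of the corresponding lemma in \cite{P05}; it does not follow from a derived-length induction. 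Until the insoluble case is handled, the residual-$p$-ness of $H$ --- and with it the entire Lie-theoretic part, which needs the Zassenhaus filtration to have trivial intersection --- is unsupported.
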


Recall that a group is locally graded if every non-trivial finitely generated subgroup has a proper subgroup of finite index. The class
of locally graded groups is fairly large and in particular it contains all residually finite groups. Proposition \ref{pp-order} can be easily extended to the case of locally graded groups. 

\begin{lemma}\label{locgrad}
Let $G$ be a locally graded group satisfying some identity. Suppose  $G$ is generated by a normal commutator closed set 
 of $p$-elements for some prime $p$. Then $G$ is locally finite.
\end{lemma}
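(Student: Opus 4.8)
The goal is to extend Proposition \ref{pp-order}, which is stated for residually finite groups, to the wider class of locally graded groups. Since a locally graded group need not be residually finite, the plan is to use the defining property of local gradedness together with the hypotheses (an identity is satisfied, and the generating set is a normal commutator-closed set of $p$-elements) to produce, inside any finitely generated subgroup of $G$, a situation where Proposition \ref{pp-order} can be applied. The natural strategy is to reduce local finiteness of $G$ to local finiteness of its finitely generated subgroups and then to attack a single finitely generated subgroup.

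First I would reduce to the finitely generated case: a group is locally finite precisely when each of its finitely generated subgroups is finite, so it suffices to show that an arbitrary finitely generated subgroup $H$ of $G$ is finite. Replacing $G$ by such an $H$, I may assume $G$ itself is finitely generated (and still locally graded, still satisfies the identity, and is still generated by a normal commutator-closed set of $p$-elements, after intersecting the generating set appropriately). The key point is then to exploit local gradedness: a nontrivial finitely generated locally graded group has a proper subgroup of finite index, hence a proper \emph{normal} subgroup of finite index, and so admits a nontrivial finite quotient. The plan is to push the hypotheses down to every finite quotient and then assemble information about $G$ from the family of its finite quotients.

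The crucial move is to pass to the residually finite image of $G$. Let $R$ be the intersection of all subgroups of finite index in $G$ (the finite residual); then $G/R$ is residually finite, still satisfies the identity, and is generated by the images of the $p$-elements, which again form a normal commutator-closed set of $p$-elements. By Proposition \ref{pp-order}, $G/R$ is locally finite, and since $G/R$ is finitely generated it is therefore finite. Thus $R$ has finite index in $G$. Now $R$ itself is finitely generated (as a finite-index subgroup of a finitely generated group), locally graded, and satisfies the identity. The decisive observation is that $R$, being the finite residual, has \emph{no} proper subgroups of finite index; combined with local gradedness this forces $R=1$, since a nontrivial finitely generated locally graded group must have a proper finite-index subgroup. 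Hence $G=G/R$ is finite, which is what we wanted.

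The main obstacle I anticipate is verifying that the defining hypotheses genuinely descend to the quotient $G/R$: one must check that the image of a normal commutator-closed set of $p$-elements is again such a set (normality and commutator-closedness are preserved by homomorphisms, and the image of a $p$-element is a $p$-element, so this is routine but must be stated carefully), and that the identity passes to the quotient (automatic, as identities are inherited by quotients). A secondary subtlety is the appeal to local gradedness of the finitely generated subgroup $R$: one needs that a finite-index subgroup of a finitely generated group is again finitely generated, and that local gradedness is inherited by subgroups, both of which are standard. Once these bookkeeping points are in place, the argument is essentially the residual-finiteness trick: reduce modulo the finite residual, apply Proposition \ref{pp-order} there, and use local gradedness to show the residual itself is trivial.
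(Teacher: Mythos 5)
Your overall strategy coincides with the paper's: pass to the finite residual $R$ of a finitely generated subgroup, apply Proposition \ref{pp-order} to the residually finite quotient to see that $R$ has finite index (hence is finitely generated), and then use local gradedness to force $R=1$, since any proper finite-index subgroup of $R$ would be a finite-index subgroup of the whole finitely generated group and would therefore contain $R$. That core argument is correct and is exactly what the paper does.

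There is, however, a genuine flaw in your opening reduction. You replace $G$ by an \emph{arbitrary} finitely generated subgroup $H$ and assert that $H$ is ``still generated by a normal commutator-closed set of $p$-elements, after intersecting the generating set appropriately.'' This is false in general: if $X$ is the given generating set of $G$ and $H=\langle h_1,\dots,h_t\rangle$ with the $h_i$ arbitrary elements of $G$, there is no reason why $H\cap X$ should generate $H$ (it could even be trivial), so the hypotheses of Proposition \ref{pp-order} need not descend to $H$. The correct reduction, which is the one the paper makes, is to consider only subgroups $K=\langle x_1,\dots,x_s\rangle$ generated by finitely many elements of $X$; for such $K$ the set $X\cap K$ is a normal commutator-closed generating set of $p$-elements, and this suffices because every finitely generated subgroup of $G=\langle X\rangle$ is contained in some such $K$. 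With that substitution your argument goes through and matches the paper's proof.
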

\begin{proof} Let $X$ be a normal commutator closed set of  $p$-elements which generate $G$.
 It is sufficient to prove that any subgroup generated by finitely many elements from $X$ is finite. 
Let  $K=\langle x_1,\dots,x_s\rangle$, where $x_1,\dots,x_s\in X$. Let $R$ be the finite residual of $K$.
 It follows from Proposition \ref{pp-order} 
that $K/R$ is 
 finite. In particular $R$  is 
finitely generated. 
 Now it is enough to prove that $R=1$. Assume by contradiction that $R$ is nontrivial. 
As $G$ is locally graded, $R$ has a proper subgroup $N$ of finite index. 
Clearly,  $N$ has also finite index in $K$, and as $R$
is the finite residual of $K$ it follows that $R\le N$, a contradiction.
\end{proof}


Important properties of the verbal subgroup corresponding to a multilinear commutator word in a soluble-by-finite group are described
in the following proposition.

\begin{proposition}\cite[Proposition 2.6]{AS1} \label{series}
Let $w$ be a multilinear commutator word, and let $G$ be  a  group having a normal soluble subgroup of finite index $e$ and derived length $s$. Then $G$ has a series of  subgroups 
$$1 = T_1 \le T_2\le \dots\le T_l = w(G)$$
such that: 
\begin{enumerate}
\item All subgroups $T_i$ are normal in $G$.
\item The length $l$ of the series is $(w, s,e)$-bounded.
\item Every section $T_{i+1}/T_i$ is abelian and can be generated by $w$-values in $G/T_{i}$ all of whose powers are also $w$-values, except possibly
one section whose order is finite and $(s,e)$-bounded. 
\end{enumerate}
\end{proposition}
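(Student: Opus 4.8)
The plan is to reduce the problem to the soluble radical of $G$ and then to read off the good sections from the multilinear--commutator combinatorics of Section~3, confining the non-soluble part of $G$ to a single bounded section. First I would let $R$ be the soluble radical of $G$, the join of all soluble normal subgroups. Since the given normal subgroup $N$ of index $e$ and derived length $s$ lies in $R$, we have $[G:R]\le e$; and as every soluble normal subgroup $M$ satisfies $\mathrm{dl}(M)\le\mathrm{dl}(M\cap N)+\mathrm{dl}(MN/N)\le s+\log_2 e$, the radical $R$ is soluble of $(s,e)$-bounded derived length, say $s'$. Because verbal subgroups commute with quotients, $w(G)R/R=w(G/R)$ is the verbal subgroup of the finite group $G/R$, so $w(G)/(w(G)\cap R)$ is finite of order at most $|G/R|\le e$. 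Setting $T_l=w(G)$ and $T_{l-1}=w(G)\cap R$, this top quotient is the single exceptional section of $(s,e)$-bounded order (it is the only place where the possibly non-soluble quotient $G/R$ intervenes), and it remains to produce a $G$-normal series from $1$ to the soluble normal subgroup $w(G)\cap R$ with abelian sections generated by $w$-values whose powers are again $w$-values.

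For this soluble part I would take as backbone the derived series of $R$, intersected with $w(G)$: the subgroups $T_i:=R^{(s'-i)}\cap w(G)$ are normal in $G$ (both factors are invariant), descend from $w(G)\cap R$ to $1$, and have abelian factors since each embeds into the abelian $R^{(j)}/R^{(j+1)}$. To realise property~(3) on each such layer I would refine it using the verbal subgroups $w_{\mathbf{i}}(G)$ and $\W_{\mathbf{i^+}}(G)$: Lemma~\ref{abelian} gives that the resulting factors are abelian, while Corollary~\ref{pow} (equivalently property~(3) of Proposition~\ref{w-series}) shows they are generated by $w_{\mathbf{i}}$-values -- which are $w$-values, being extensions of $w$ -- whose powers remain $w$-values modulo the lower term. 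Lemma~\ref{lem:delta_k}, giving $G^{(k)}\le w(G)$, guarantees that there are enough such values sitting inside the deep derived factors. Since $R$ has $(s,e)$-bounded derived length and each layer needs only a $k$-bounded refinement, the whole series has $(w,s,e)$-bounded length, as required in~(2), and normality in~(1) is automatic from full invariance of the $w_{\mathbf{i}}(G)$.

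The hard part is exactly the verification of property~(3) layer by layer, namely that the intersection of each abelian derived factor of $R$ with $w(G)$ is generated by $w$-values all of whose powers are $w$-values. The difficulty is twofold. First, $G$ itself may be non-soluble -- it may even be perfect, with $G^{(d)}$ never reaching $1$ -- so one cannot obtain the whole series simply as the image in $G$ of the free-group series of Proposition~\ref{w-series}, which only descends to $G^{(s_0)}$; the device that rescues this is to push the entire finite quotient $G/R$ into the one exceptional section and to work thereafter inside the soluble radical, where the derived-length bound forces termination at $1$. Second, a subgroup of a section generated by $w$-values need not itself be generated by $w$-values, so the good generators of each layer must be extracted directly from the verbal subgroups $w_{\mathbf{i}}(G)$ via Lemma~\ref{pow_old} and Corollary~\ref{pow}, rather than by naively intersecting a ready-made series with $R$. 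Controlling the interaction between the derived filtration of $R$ and the $w_{\mathbf{i}}$-filtration, so that the combined series is simultaneously abelian-sectioned, $w$-value generated, and of bounded length, is the principal technical obstacle.
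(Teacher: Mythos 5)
The paper does not prove this proposition at all: it is quoted verbatim from \cite{AS1} (Proposition 2.6 there), so there is no internal argument to compare against, and your proposal has to stand on its own. It does not, because there is a genuine gap at exactly the point you yourself flag as ``the principal technical obstacle''. The outer reduction is fine: every soluble normal subgroup has derived length at most $s+\log_2 e$, so the soluble radical $R$ is soluble of $(s,e)$-bounded derived length and index at most $e$, and $w(G)/(w(G)\cap R)\cong w(G)R/R\le G/R$ does serve as the single finite section of $(s,e)$-bounded order. What is missing is the entire soluble part: you must produce a bounded $G$-normal series from $1$ up to $w(G)\cap R$ whose sections are abelian and generated by $w$-values of the quotient all of whose powers are again $w$-values, and neither device you invoke delivers this. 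The factors of $T_i=R^{(s'-i)}\cap w(G)$ are abelian, but a subgroup of the form $R^{(j)}\cap w(G)$ is in general not generated by the $w$-values it contains, and your argument never exhibits generators of these sections that are $w$-values, let alone power-closed ones.

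The proposed rescue via the $w_{\mathbf{i}}$-machinery of Section~3 cannot work as stated, because that machinery is calibrated to the derived series of $G$, not of $R$. Lemma \ref{abelian}, Lemma \ref{pow_old} and Corollary \ref{pow} all require $\W_{\mathbf{i^+}}(G)=1$, and Proposition \ref{w-series} bottoms out at $\U_0(F)=F^{(s)}$; this is useful precisely when $G^{(s_0)}=1$ for some bounded $s_0$, i.e.\ when $G$ itself is soluble. In the soluble-by-finite case $G$ may be perfect: take $G=V\rtimes A_5$ with $V$ an infinite elementary abelian $2$-group on which $A_5$ acts with $[V,A_5]=V$. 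Then $G=G'$, so for $w=[x_1,x_2]$ one has $\delta_j(G)=G$ for all $j$ and, by Lemma \ref{lem:delta_k} applied to $w_{\mathbf{i}}$, also $w_{\mathbf{i}}(G)=\W_{\mathbf{i^+}}(G)=G$ for every $\mathbf{i}$; the hypotheses of Lemmas \ref{abelian} and \ref{pow_old} are never met and the proposed refinement of the layer $1\le V\le G$ by verbal subgroups is vacuous. The claim that $V=w(G)\cap R$ is generated by commutators all of whose powers are commutators is true in this example, but for reasons particular to it, and it is exactly what remains unproved in general. A correct argument has to manufacture the generators of each abelian layer inside the soluble normal subgroup directly as $w$-values of $G$ --- this is where \cite{AS1} does the real work --- and your write-up records the need for such a step without supplying it.
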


The next lemma is a version of Theorem \ref{bb} in the case when the group $G$ is soluble-by-finite. 
 Note that $G$ here is not required to be residually finite and $q$ is an arbitrary integer. 

\begin{proposition}\label{soluble-by-fin} Let $m,n,q,s,e$ be positive integers. Suppose that $w=w(x_1,\ldots,x_k)$ is a multilinear commutator word and 
$v=[w^q,{}_ny]$. Assume that $G$ has  a normal soluble subgroup of finite index $e$ and derived length $s$ and suppose  that $v$ has at most $m$ values in $G$. 
Then the order of $v(G)$ is $(v,m,s,e)$-bounded.
\end{proposition}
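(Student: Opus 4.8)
The plan is to follow the strategy of the proof of Theorem \ref{aa}, replacing the tools that rely on residual finiteness by ones available in the soluble-by-finite setting. First I would invoke Lemma \ref{zero} to pass to the quotient $G/v(G)'$, which preserves all the hypotheses (a normal soluble subgroup of index $\le e$ and derived length $\le s$ survives, and $v$ still takes at most $m$ values), and so assume that $v(G)$ is abelian. Since $G$ permutes the at most $m$ values of $v$ by conjugation, the centraliser $C_G(G_v)$ has index at most $m!$ in $G$; as $v(G)=\langle G_v\rangle$, every $t\in G$ induces on $v(G)$ an automorphism of order dividing $m!$. By Proposition \ref{series} applied to $G$, the verbal subgroup $w(G)$ possesses a series $1=T_1\le\dots\le T_l=w(G)$ of subgroups normal in $G$, of $(w,s,e)$-bounded length $l$, with abelian sections, all but one of which are generated by $w$-values whose powers are again $w$-values; the exceptional section has $(s,e)$-bounded order. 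In particular $w(G)$ is soluble of $(w,s,e)$-bounded derived length, and hence so is every subgroup of the form $\langle g^q,t\rangle$ with $g\in G_w$ and $t\in G$, because the normal closure of $g^q$ in $\langle g^q,t\rangle$ lies in $w(G)$ and the quotient by it is cyclic.

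The first technical ingredient I would establish is a soluble-by-finite analogue of Lemma \ref{H}: if the $w^q$-values are right $(n+1)$-Engel, then $\langle g^q,t\rangle$ is nilpotent of $(v,m,s,e)$-bounded class for all $g\in G_w$, $t\in G$. The argument copies that of Lemma \ref{H}: by Lemma \ref{normal closure} the normal closure $H$ of $\langle g^q\rangle$ in $\langle g^q,t\rangle$ is generated by at most $n+1$ conjugates of $g^q$, which are $w^q$-values and hence, by Lemma \ref{heineken} and Lemma \ref{inverse}, left $(n+2)$-Engel; since $H\le w(G)$ is soluble of bounded derived length, Lemma \ref{danilo} forces $H$ to be nilpotent of bounded class, and Lemma \ref{criterion for nilpotency} then yields the claim. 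The point is that here Lemma \ref{danilo} plays the role that Proposition \ref{var22} plays in the residually finite case, so no finiteness hypothesis is needed.

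The heart of the argument, and the step I expect to be the main obstacle, is the \emph{Engel improvement}: producing a normal subgroup $M\le v(G)$ of $(v,m,s,e)$-bounded order such that the $w^q$-values become right $(n+1)$-Engel modulo $M$. I would run an induction along the series $T_1\le\dots\le T_l$, exactly as the induction over the sets $\U_0,\dots,\U_r$ in the proof of Theorem \ref{aa}. The base step treats $\delta_{2l}$-values (with a $(k,m)$-bounded $l$ chosen so that $\delta_l$-values are $w$-values centralising $v(G)$): realising such a value $g$ inside a $t$-invariant subgroup $H$ generated by $\delta_l$-values makes $g$ a genuine $w$-value of the soluble group $H\langle t\rangle$, whence $g^q$ is a $w^q$-value there, so Lemma \ref{soluble} bounds $|v(H\langle t\rangle)|$ and thus the order of $[g^q,{}_nt]$, furnishing the initial bounded subgroup. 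For the inductive step I would use the power-closedness of the good sections of Proposition \ref{series}: since these sections are abelian and powers of $w$-values are $w$-values modulo the previous term, one has $[g^q,{}_nt]^i\equiv[\hat g^{\,q},{}_nt]\pmod{T_{j-1}}$ with $\hat g$ a $w$-value for each integer $i$, i.e. an analogue of Proposition \ref{ri} holds for $v=[w^q,{}_ny]$. Since $v$ has at most $m$ values, two such powers coincide modulo $T_{j-1}$, so a nontrivial power of $[g^q,{}_nt]$ is a product of boundedly many lower-level values that are already right $(n+1)$-Engel by induction; the nilpotency lemma of the previous paragraph gives $[Z,{}_{c'}t]=1$ for the relevant subgroup $Z$ and a bounded $c'$, and Lemma \ref{casolo}, together with the fact that $t$ acts on $v(G)$ with order dividing $m!$, shows that $[v(G)\cap Z,t]$ has $(v,m,s,e)$-bounded exponent. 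Collecting the subgroup of $v(G)$ generated by the elements of this small order produces $M$ and advances the induction, the exceptional section of Proposition \ref{series} being absorbed into $M$ at the outset.

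Finally, passing to $G/M$ I may assume the $w^q$-values are right $(n+1)$-Engel, so by the nilpotency lemma each $\langle g^q,t\rangle$ is nilpotent of $(v,m,s,e)$-bounded class. Every conjugate of $[g^q,{}_nt]$ is again a $v$-value, so there are at most $m$ of them, and Corollary \ref{short} bounds the order of $[g^q,{}_nt]$. Hence $v(G)$ is an abelian group of rank at most $m$ generated by elements of $(v,m,s,e)$-bounded order, and therefore $|v(G)|$ is $(v,m,s,e)$-bounded. The delicate point throughout is that, unlike in Theorem \ref{aa}, $G$ is not residually finite; this is circumvented by using Proposition \ref{series} to supply both the solubility of $w(G)$ (feeding Lemma \ref{danilo}) and the power-closed series that replaces the machinery of Section 3 for the word $[w^q,{}_ny]$.
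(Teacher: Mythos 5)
Your overall frame (reduce to $v(G)$ abelian via Lemma \ref{zero}, bring in Proposition \ref{series}, make the $w^q$-values right $(n+1)$-Engel modulo a subgroup of bounded order, and finish with the normal-closure/nilpotency argument and Corollary \ref{short}) matches the paper's, and your endgame is essentially theirs. But the middle of your argument has two genuine gaps. The first is a misreading of Proposition \ref{series}: the exceptional section is exempt from being abelian as well as from the generation property --- it is merely of finite $(s,e)$-bounded order, and a group of bounded order need not be soluble (take $G=A_5$, $w=[x_1,x_2]$: the exceptional section is $A_5$ itself). So $w(G)$ need not be soluble of bounded derived length, your claim that the normal closure $H\le w(G)$ of $\langle g^q\rangle$ has bounded derived length fails, and Lemma \ref{danilo} cannot be applied to it as you propose. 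The paper repairs exactly this point by intersecting that normal closure $U$ with the given normal soluble subgroup of $G$: the quotient is finite of order at most $e$ and generated by left Engel elements, hence nilpotent by Lemma \ref{fitting}, whence $U$ is soluble of bounded derived length and only then does Lemma \ref{danilo} apply. The same issue undermines your base step: $G/C_G(v(G))$ has order at most $m!$ but need not be soluble, so there need be no $l$ with $G^{(l)}\le C_G(v(G))$.

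The second gap is the one you flagged yourself: the inductive step of your ``Engel improvement'' requires an analogue of Proposition \ref{ri} for the word $[w^q,{}_ny]$, and you do not actually have one. The power-closure in Proposition \ref{series}(3) is a property of the \emph{designated generators} of each section, not of arbitrary $w$-values, so the congruence $[g^q,{}_nt]^i\equiv[\hat g^{\,q},{}_nt]\pmod{T_{j-1}}$ is unjustified for a general $g\in G_w$; moreover an arbitrary $w$-value carries no well-defined ``level'' in the group-specific series of Proposition \ref{series}, so the induction has nothing to run on. The paper avoids this entirely with a shorter induction on the length of the series itself: assuming $|v(G)L/L|$ bounded, where $L$ is the bottom nontrivial term, it takes a designated generator $g$ of the good section $L$, observes that \emph{every} power $[g^q,{}_nt]^i=[g^{iq},{}_nt]$ is itself a $v$-value because $g^i$ is again a $w$-value, and concludes by pigeonhole among the at most $m$ values of $v$ that $[g^q,{}_nt]$ has order at most $m$. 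Killing a bounded subgroup then gives $[K,{}_nt]=1$ for $K=v(G)\cap L$, after which Lemma \ref{casolo} and Schur's theorem centralize $v(G)$ modulo bounded subgroups and your endgame takes over. You should replace your Engel-improvement induction by this pigeonhole argument on the designated generators.
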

\begin{proof} In view of Lemma \ref{zero} we may assume that $v(G)$ is abelian. 
Consider a series  as in Proposition \ref{series}.
We will use induction on the length of this series, the case $w(G)=1$ being trivial. Let $L$ be the last nontrivial term of the series. By induction we assume that the image of $v(G)$ in $G/L$ has finite $(v,m,s,e)$-bounded order. Set $K=v(G)\cap L$. It follows that the index of $K$ in $v(G)$ is $(v,m,s,e)$-bounded. 

Without loss of generality, we may assume that the subgroup $L$ is abelian and can be generated by $w$-values in $G$ all of whose powers are also $w$-values. 
Let $g\in L$  be one of those $w$-values and let $t\in G$. Then for every positive integer $i$ the element $[g^{iq},{}_nt]$ is a $v$-value. As $L$ is 
 abelian, it follows that $[g^{iq},{}_nt]=[g^q,{}_nt]^i$.
Since $v$ has at most $m$ values in $G$, 
 there are two different integers $i_1,i_2$ with $0\le i_1<i_2\le m$ such that 
 $[g^q,{}_nt]^{i_1}=[g^q,{}_nt]^{i_2}$. It follows that $[g^q,{}_nt]$ has order at most $m$, and consequently $[g,{}_nt]$ has order at most $mq$. 
 Let $T_1$ be the subgroup of $v(G)$ generated by all elements of order at most $mq$. As $v(G)$ is abelian with at most $m$ generators, $T_1$ 
has $(m,q)$-bounded order. Thus, we can pass to the quotient $G/T_1$ and assume that $[g,{}_nt]=1$ for every generator $g$ of $L$ chosen as above and for every $t\in G$. Since $L$ is abelian, it follows that $[L,{}_nt]=1$ for every $t\in G$. In particular, $[K,{}_n t]=1$  for every $t\in G$.

Since the index of $C_G(v(G))$ in $G$ is $m$-bounded, the conjugation by an arbitrary element $t\in G$ induces an automorphism of $v(G)$ of $m$-bounded order, say $r$. Lemma \ref{casolo} tells us that $[K^{r^{n-1}},t]=1$. As $K$ is abelian, it follows that $[K,t]$ has exponent dividing $r^{n-1}$, which is $(m,n)$-bounded. Let $T_2$ be the subgroup of $v(G)$ generated by all elements of order at most $r^{n-1}$. We can pass to the quotient $G/T_2$ and without loss of 
generality assume that $[K,t]=1$ for every $t\in G$. Therefore $K$ is contained in the center of the group $G$. Further, 
note that $K\langle t^r\rangle$ is a central subgroup of $v(G)\langle t\rangle$ and has $(v, m,s,e)$-bounded index in $v(G)\langle t\rangle$. So
by Schur's Theorem \cite[10.1.4]{Rob} the derived subgroup of $v(G)\langle t\rangle$ has $(v,m,s,e)$-bounded order, and the bound  does not depend on the choice of $t$. 
Arguing as before and factoring out an appropriate  subgroup of $v(G)$ of bounded order, we may assume that $[v(G),t]=1$ for every $t\in G$, 
that is, $v(G)$ is contained in the center of $G$.

In particular, $[g^q,{}_{n+1}t]=1$ for every $g\in G_w$ and every $t\in G$. So every $w^q$-value is right $(n+1)$-Engel in $G$. 
Thus, by Lemma \ref{heineken} combined with Lemma \ref{inverse}, every $w^q$-value is left $(n+2)$-Engel.  

 It follows from Lemma \ref{normal closure} that the normal closure $U$ of the subgroup $\langle g^q\rangle$
in the group $\langle g^q,t\rangle$ is generated by the set $A=\{(g^q)^{t^i}|i=0,\dots, n\}$ whose elements are left $(n+2)$-Engel. The subgroup $U$ has a  
normal soluble subgroup $N$ of index at most $e$ and derived length at most $s$.
It follows from Lemma \ref{fitting} that $U/N$ is nilpotent, and as it has bounded order, in particular it is soluble of
$e$-bounded derived length. 
 Lemma \ref{danilo} now tells us that $U$ is nilpotent with $(n,s,e)$-bounded class. 
 As $[a,{}_{n+1}t]=1$ for every $a\in A$, 
 Lemma \ref{criterion for nilpotency} shows that  $U\langle t\rangle=\langle g^q,t\rangle$ is nilpotent of $(v,m,s,e)$-bounded class.

By Corollary \ref{short} the element $[g^q,{}_{n}t]$ has 
$(v,m,s,e)$-bounded order. 
Thus, we have shown that $v(G)$ is an abelian group of rank at most $m$ generated by elements of $(v,m,s,e)$-bounded order. 
Hence, the order of $v(G)$ is  $(v,m,s,e)$-bounded. The proof is complete.
\end{proof}

\begin{proof}[Proof of Theorem \ref{bb}]
Let $G$ be a residually finite group with finitely many $v$-values,  
 where $v=[w^q,{}_ny]$, $w$ is  multilinear commutator word and $q$ is a prime power. 
In view of Lemma \ref{zero} we may pass to the quotient $G/v(G)'$ and assume that $v(G)$ is abelian. Since $v(G)$ is finitely generated, it is clear that elements of finite order 
in $v(G)$ form a finite normal subgroup. We pass to the quotient over this subgroup and without loss of generality assume that $v(G)$ is torsion-free.

Since $G$ is residually finite, we can choose a normal subgroup $H$ of finite index such that $H$ intersects $G_v$ trivially. Observe that every $w^q$-value in $H$ is right $n$-Engel. It follows from Lemmas \ref{heineken} and \ref{inverse} that every $w^q$-value in $H$ is left $(n+1)$-Engel.
Let $K=w^q(H)$. It follows from Proposition \ref{non22} that $K$ is locally nilpotent. Moreover, Lemma \ref{criterion for nilpotency} implies that for each $t\in G$ the subgroup $\langle K,t\rangle$ is locally nilpotent. Note that  $G/K$ need not be residually finite but by the result in \cite{LMS} the quotient $G/K$ is locally graded. We deduce from Lemma \ref{locgrad} that $w(H)/K$ is locally  finite.

Note that in the group $\bar G=G/w(H)$ the subgroup $\bar H$ is a soluble normal subgroup of finite index. Proposition \ref{soluble-by-fin} guarantees that $v(\bar G)$ is finite. Thus, $v(G)\cap w(H)$ has finite index in $v(G)$. 

Using the fact that  $v(G)$ is finitely generated and the 
 local finiteness of $w(H)/K$ we 
deduce that $v(G)\cap K$ has finite index in $v(G)\cap w(H)$. Thus $v(G)\cap K$ has finite index in $v(G)$. 
In particular, $v(G)\cap K$  is finitely generated.

Choose $t\in G$. Since the subgroup $\langle K,t\rangle$ is locally nilpotent and $v(G)\cap K$ is finitely generated,
there exists an integer $s$ such that $[v(G)\cap K,{}_st]=1$. 

Since the index of $C_G(v(G))$ in $G$ is finite, the
conjugation by  $t$ induces an automorphism of $v(G)$ of finite 
 order, say $r$. Lemma \ref{casolo} tells us that $[(v(G)\cap K)^{r^{s-1}},t]=1$. As 
$v(G)\cap K$ is abelian, it follows that $[v(G)\cap K,t]$ has finite exponent dividing $r^{s-1}$.
 Since $v(G)$ is torsion-free, $[v(G)\cap K,t]=1$.

We see that $(v(G)\cap K)\langle t^r\rangle$ is a central subgroup of finite index  in $v(G)\langle t\rangle$.  
 So by Schur's Theorem the commutator subgroup of $v(G)\langle t\rangle$ is finite. Since the commutator subgroup is contained in $v(G)$, 
 which is torsion-free, it follows that $[v(G), t]=1$. Since $t$ is arbitrary, $v(G)$ is contained in the center of $G$. 
In particular, if $g\in G_w$ and $t\in G$, we have $[g^q,{}_{n+1}t]=1$. 
 
 Hence, by Lemma \ref{H} $\langle g^q, t \rangle$ is nilpotent and so 
 by Corollary \ref{short} we obtain that  $[g^q,{}_{n}t]$ has finite order. 

Since $[g^q,{}_{n}t]$ is  an arbitrary $v$-value and $v(G)$ is torsion-free, we conclude that $v(G)=1$. The theorem is established.
\end{proof}

\end{document}